\documentclass[11pt,reqno]{amsart} 
\usepackage[utf8]{inputenc}
\usepackage{amsfonts,amssymb,latexsym,amsmath,epsfig,amsthm} 
\usepackage{hyperref}
\usepackage{cleveref}

\makeatletter

\renewcommand{\@seccntformat}[1]{\csname the#1\endcsname. }

\usepackage{fullpage}
\makeatother

\newtheorem{theorem}{Theorem}
\newtheorem{lemma}{Lemma}
\newtheorem{proposition}{Proposition}
\newtheorem{corollary}{Corollary}

\theoremstyle{definition}
\newtheorem{definition}{Definition}

\newcommand{\PF}{\mathrm{PF}}
\newcommand{\CPF}{\mathrm{CPF}}
\newcommand*{\out}{\mathcal{O}}
\newcommand*{\pat}{\mathrm{pattern}}

\usepackage{ytableau}

\usepackage{tikz}
\usetikzlibrary{patterns}
\newcommand{\cross}[2]{ \draw[thick] (#1,#2)--(#1-1,#2-1);
     \draw[thick] (#1-1,#2)--(#1,#2-1);}

\newcommand{\lgperm}[2]{
 \foreach \y [count=\x] in #1 {
     \cross{\x}{\y}
}
\draw (0,0) grid (#2,#2);
}

\begin{document}
\title{Parking Cascades: From the Simplest Sequence to Motzkin and Catalan}

\author[Adenbaum]{Ben Adenbaum}
\address[B.~Adenbaum]{}
\email{\textcolor{blue}{\href{mailto:benadenbaummath@gmail.com}{benadenbaummath@gmail.com}}}

\author[D\'iaz Morera]{N\'estor F. D\'iaz Morera}
\address[N.~F.~D\'iaz Morera]{Mathematics Department, Fitchburg State University, 160 Pearl St, Fitchburg, MA 01420}
\email{\textcolor{blue}{\href{mailto:	ndiazmor@fitchburgstate.edu}{	ndiazmor@fitchburgstate.edu}}}

\author[Elder]{Jennifer Elder}
\address[J.~Elder]{Department of Computer Science, Mathematics and Physics, Missouri Western State University}
\email{\textcolor{blue}{\href{mailto:jelder@missouriwestern.edu}{jelder@missouriwestern.edu}}}

\author[Harris]{Pamela E. Harris}
\address[P.~E.~Harris]{Department of Mathematical Sciences, University of Wisconsin-Milwaukee}
\email{\textcolor{blue}{\href{mailto:peharris@uwm.edu}{peharris@uwm.edu}}}

\author[Lynch]{Molly Lynch}
\address[M.~Lynch]{Department of Mathematics, Statistics and Computer Science, Hollins University, 7916 Williamson Road
Roanoke, VA 24020}
\email{\textcolor{blue}{\href{mailto:lynchme2@hollins.edu}{lynchme2@hollins.edu}}}

\author[Mart\'inez Mori]{J. Carlos Mart\'inez Mori}
\address[J.~C. Mart\'inez Mori]{Department of Mathematical and Statistical Sciences, University of Colorado Denver, Denver, CO, US}
\email{\textcolor{blue}{\href{mailto:carlos.martinezmori@ucdenver.edu}{carlos.martinezmori@ucdenver.edu}}}

\begin{abstract}
We introduce $k$-cascading parking functions, a parametrized variant of parking functions in which cars form bumping cascades of up to $k \geq 0$ cars.
Setting $k = 0$ recovers classical parking functions, whereas $k = 1$ recovers MVP parking functions.
Although parking functions and cascading parking functions are equivalent as sets, they are generally distinct as maps. 
Therefore, in this paper we consider the enumeration of the fibers of their outcomes.
Our main result is a recursive, permutation pattern-based formula for the size of the fiber of any given permutation, for any given $k \geq 0$.
When specialized to the longest word, the formula reduces to a family of integer sequences that interpolate between the simplest sequence ($k=0$), the Motzkin numbers ($k = 1$), and the Catalan numbers ($k\geq n-1$).
When specialized to the set of layered permutations, the formula gives new combinatorial interpretations for the row sums of certain convolution triangles, including Motzkin and Catalan convolution triangles.
\end{abstract}
\maketitle

\section{Introduction}

Consider $n \in \mathbb{N} = \{1, 2, 3,\ldots\}$ cars in line waiting to park on a one-way street with $n$ parking spots. 
For each $i\in[n]=\{1,2,\ldots,n\}$, car $i$ has spot $a_i \in [n]$ as its preference: we compile these preferences into $\alpha=(a_1,a_2,\ldots,a_n)\in[n]^n$, which we call the cars' preference tuple. 
Cars enter the street in sequential order.
When car $i$ enters the street, it parks in its preferred spot $a_i$ if it is available. 
Otherwise, car $i$ continues forward on the street and parks in the first available spot it encounters, if any. 
If all cars can park, then $\alpha$ is a \emph{parking function} of length $n$. 
For example, $(2,1,1,4,3)$ is a parking function while $(5,5,5,5,5)$ is not, as the second car immediately fails to park.
Parking functions were introduced by Konheim and Weiss~\cite{konheim1966occupancy} in their study of hashing with linear probing.

We introduce a new variant of the parking process that depends on a nonnegative integer parameter $k \geq 0$. 
As with traditional parking functions, when car $i$ enters the street, it first checks its preferred spot. 
However, if car $i$ finds its preferred spot occupied and $k \geq 1$, it bumps the occupying car out of it and parks there. 
The displaced car in turn attempts to park in the spot immediately after and, if it finds it occupied and $k \geq 2$, it in turn bumps the occupying car for it to park again, and so on, leading to a cascade of up to $k$ cars bumped out of their originally occupied spots.
If $k$ bumps have occurred, the last displaced car continues down the street without bumping any more cars and parks in the first unoccupied spot it finds, if any. 
If all cars can park under this rule, then the tuple of parking preferences is a $k$-cascading parking function.

For example, in Figure~\ref{fig:t-shirtexample}, $(1,2,3,1,1)$ is a $2$-cascading parking function of length $5$.
\begin{figure}[ht]
    \centering
    \resizebox{0.75\linewidth}{!}{
    \begin{tikzpicture}
        \draw[ultra thick](-1.25,-.6)--(1.25,-.6);
        \draw[ultra thick](1.75,-.6)--(4.25,-.6);
        \draw[ultra thick](4.75,-.6)--(7.25,-.6);
        \draw[ultra thick](7.75,-.6)--(10.25,-.6);
        \draw[ultra thick](10.75,-.6)--(13.25,-.6);
        \node at(0,-1){\textbf{1}};
        \node at(3,-1){\textbf{2}};
        \node at(6,-1){\textbf{3}};
        \node at(9,-1){\textbf{4}};
        \node at(12,-1){\textbf{5}};

        \node at(0,0-2.5){\includegraphics[width=1in]{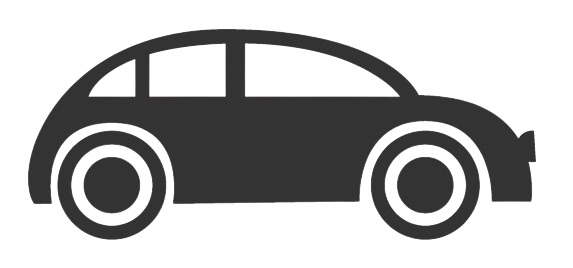}};
        \node at(0,-.05-2.5){\textcolor{white}{\textbf{1}}};
        \node at(.9,1.-2.5){\includegraphics[width=.75in]{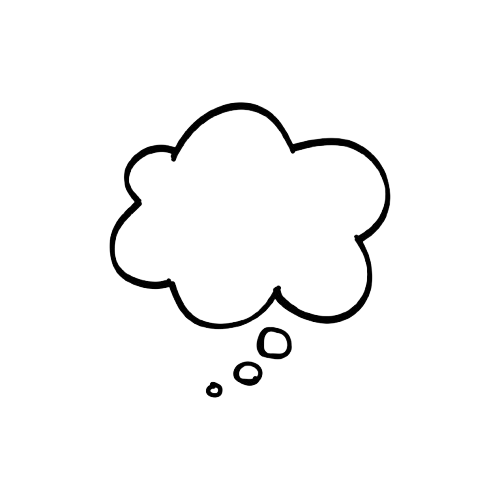}};
        \node at(.9,1.1-2.5){\textbf{1}};
        \draw[ultra thick](-1.25,-.6-2.5)--(1.25,-.6-2.5);
        \draw[ultra thick](1.75,-.6-2.5)--(4.25,-.6-2.5);
        \draw[ultra thick](4.75,-.6-2.5)--(7.25,-.6-2.5);
        \draw[ultra thick](7.75,-.6-2.5)--(10.25,-.6-2.5);
        \draw[ultra thick](10.75,-.6-2.5)--(13.25,-.6-2.5);
        \node at(0,-1-2.5){\textbf{1}};
        \node at(3,-1-2.5){\textbf{2}};
        \node at(6,-1-2.5){\textbf{3}};
        \node at(9,-1-2.5){\textbf{4}};
        \node at(12,-1-2.5){\textbf{5}};

        \node at(0,0-5.0){\includegraphics[width=1in]{car.png}};
        \node at(3,0-5.0){\includegraphics[width=1in]{car.png}};
        \node at(0,-.05-5.0){\textcolor{white}{\textbf{1}}};
        \node at(3,-.05-5.0){\textcolor{white}{\textbf{2}}};
        \node at(.9,1.-5.0){\includegraphics[width=.75in]{callout.png}};
        \node at(3.9,1.-5.0){\includegraphics[width=.75in]{callout.png}};
        \node at(.9,1.1-5.0){\textbf{1}};
        \node at(3.9,1.1-5.0){\textbf{2}};
        \draw[ultra thick](-1.25,-.6-5.0)--(1.25,-.6-5.0);
        \draw[ultra thick](1.75,-.6-5.0)--(4.25,-.6-5.0);
        \draw[ultra thick](4.75,-.6-5.0)--(7.25,-.6-5.0);
        \draw[ultra thick](7.75,-.6-5.0)--(10.25,-.6-5.0);
        \draw[ultra thick](10.75,-.6-5.0)--(13.25,-.6-5.0);
        \node at(0,-1-5.0){\textbf{1}};
        \node at(3,-1-5.0){\textbf{2}};
        \node at(6,-1-5.0){\textbf{3}};
        \node at(9,-1-5.0){\textbf{4}};
        \node at(12,-1-5.0){\textbf{5}};

        \node at(0,0-7.5){\includegraphics[width=1in]{car.png}};
        \node at(3,0-7.5){\includegraphics[width=1in]{car.png}};
        \node at(6,0-7.5){\includegraphics[width=1in]{car.png}};
        \node at(0,-.05-7.5){\textcolor{white}{\textbf{1}}};
        \node at(3,-.05-7.5){\textcolor{white}{\textbf{2}}};
        \node at(6,-.05-7.5){\textcolor{white}{\textbf{3}}};
        \node at(.9,1.-7.5){\includegraphics[width=.75in]{callout.png}};
        \node at(3.9,1.-7.5){\includegraphics[width=.75in]{callout.png}};
        \node at(6.9,1.-7.5){\includegraphics[width=.75in]{callout.png}};
        \node at(.9,1.1-7.5){\textbf{1}};
        \node at(3.9,1.1-7.5){\textbf{2}};
        \node at(6.9,1.1-7.5){\textbf{3}};
        \draw[ultra thick](-1.25,-.6-7.5)--(1.25,-.6-7.5);
        \draw[ultra thick](1.75,-.6-7.5)--(4.25,-.6-7.5);
        \draw[ultra thick](4.75,-.6-7.5)--(7.25,-.6-7.5);
        \draw[ultra thick](7.75,-.6-7.5)--(10.25,-.6-7.5);
        \draw[ultra thick](10.75,-.6-7.5)--(13.25,-.6-7.5);
        \node at(0,-1-7.5){\textbf{1}};
        \node at(3,-1-7.5){\textbf{2}};
        \node at(6,-1-7.5){\textbf{3}};
        \node at(9,-1-7.5){\textbf{4}};
        \node at(12,-1-7.5){\textbf{5}};

        \node at(0,0-10.0){\includegraphics[width=1in]{car.png}};
        \node at(3,0-10.0){\includegraphics[width=1in]{car.png}};
        \node at(6,0-10.0){\includegraphics[width=1in]{car.png}};
        \node at(9,0-10.0){\includegraphics[width=1in]{car.png}};
        \node at(0,-.05-10.0){\textcolor{white}{\textbf{4}}};
        \node at(3,-.05-10.0){\textcolor{white}{\textbf{1}}};
        \node at(6,-.05-10.0){\textcolor{white}{\textbf{3}}};
        \node at(9,-.05-10.0){\textcolor{white}{\textbf{2}}};
        \node at(.9,1.-10.0){\includegraphics[width=.75in]{callout.png}};
        \node at(3.9,1.-10.0){\includegraphics[width=.75in]{callout.png}};
        \node at(6.9,1.-10.0){\includegraphics[width=.75in]{callout.png}};
        \node at(9.9,1.-10.0){\includegraphics[width=.75in]{callout.png}};
        \node at(.9,1.1-10.0){\textbf{1}};
        \node at(3.9,1.1-10.0){\textbf{1}};
        \node at(6.9,1.1-10.0){\textbf{3}};
        \node at(9.9,1.1-10.0){\textbf{2}};
        \draw[ultra thick](-1.25,-.6-10.0)--(1.25,-.6-10.0);
        \draw[ultra thick](1.75,-.6-10.0)--(4.25,-.6-10.0);
        \draw[ultra thick](4.75,-.6-10.0)--(7.25,-.6-10.0);
        \draw[ultra thick](7.75,-.6-10.0)--(10.25,-.6-10.0);
        \draw[ultra thick](10.75,-.6-10.0)--(13.25,-.6-10.0);
        \node at(0,-1-10.0){\textbf{1}};
        \node at(3,-1-10.0){\textbf{2}};
        \node at(6,-1-10.0){\textbf{3}};
        \node at(9,-1-10.0){\textbf{4}};
        \node at(12,-1-10.0){\textbf{5}};

        \node at(0,0-12.5){\includegraphics[width=1in]{car.png}};
        \node at(3,0-12.5){\includegraphics[width=1in]{car.png}};
        \node at(6,0-12.5){\includegraphics[width=1in]{car.png}};
        \node at(9,0-12.5){\includegraphics[width=1in]{car.png}};
        \node at(12,0-12.5){\includegraphics[width=1in]{car.png}};
        \node at(0,-.05-12.5){\textcolor{white}{\textbf{5}}};
        \node at(3,-.05-12.5){\textcolor{white}{\textbf{4}}};
        \node at(6,-.05-12.5){\textcolor{white}{\textbf{3}}};
        \node at(9,-.05-12.5){\textcolor{white}{\textbf{2}}};
        \node at(12,-.05-12.5){\textcolor{white}{\textbf{1}}};
        \node at(.9,1.-12.5){\includegraphics[width=.75in]{callout.png}};
        \node at(3.9,1.-12.5){\includegraphics[width=.75in]{callout.png}};
        \node at(6.9,1.-12.5){\includegraphics[width=.75in]{callout.png}};
        \node at(9.9,1.-12.5){\includegraphics[width=.75in]{callout.png}};
        \node at(12.9,1.-12.5){\includegraphics[width=.75in]{callout.png}};
        \node at(.9,1.1-12.5){\textbf{1}};
        \node at(3.9,1.1-12.5){\textbf{1}};
        \node at(6.9,1.1-12.5){\textbf{3}};
        \node at(9.9,1.1-12.5){\textbf{2}};
        \node at(12.9,1.1-12.5){\textbf{1}};
        \draw[ultra thick](-1.25,-.6-12.5)--(1.25,-.6-12.5);
        \draw[ultra thick](1.75,-.6-12.5)--(4.25,-.6-12.5);
        \draw[ultra thick](4.75,-.6-12.5)--(7.25,-.6-12.5);
        \draw[ultra thick](7.75,-.6-12.5)--(10.25,-.6-12.5);
        \draw[ultra thick](10.75,-.6-12.5)--(13.25,-.6-12.5);
        \node at(0,-1-12.5){\textbf{1}};
        \node at(3,-1-12.5){\textbf{2}};
        \node at(6,-1-12.5){\textbf{3}};
        \node at(9,-1-12.5){\textbf{4}};
        \node at(12,-1-12.5){\textbf{5}};
    \end{tikzpicture}
    }
    \caption{
        Illustration of the parking process of $(1,2,3,1,1)$ under the $2$-cascading parking protocol.
        The cars and spots are labeled, and each car indicates its preferred spot.
        The final parking order is $(5,4,3,2,1)$; the longest word.
    }
    \label{fig:t-shirtexample}
\end{figure}
Conversely, one can confirm that $(1,2,3,3,6,6)$ is not a $2$-cascading parking function.
The $k=1$ case has been previously termed ``MVP parking functions'' by Harris, Kamau, Mart\'inez Mori, and Tian  
\cite{harris2023outcome}.
In $1$-cascading parking functions, there can only be a single bump caused by a car that finds its preferred spot occupied, which sends the bumped car down the street in search of a brand new spot.

\subsection{Our Results.}
The main theme of our analysis is the final configuration of cars at the conclusion of a parking protocol.
Specifically, the \emph{outcome} of a parking function records the final parking spot of each car as an element of $\mathfrak{S}_n$, the set of permutations on $[n]$. 
We use a version of one-line notation, so that if $\pi= (\pi_1, \pi_2,\ldots,\pi_n) \in \mathfrak{S}_n$ is an outcome permutation, then $\pi_i=j$ indicates that car $j$ ultimately parks in spot $i$. 

In the classical parking functions setting, there is a product formula for the number of parking functions that park in any given outcome \cite[Proposition 3.3]{colmenarejo2021counting}. 
However, for most variants of parking functions, deriving such formulas can be challenging, and this task rarely leads to known integer sequences. 

Still, an interesting result in \cite{harris2023outcome} involves the enumeration of $1$-cascading parking functions that park in the order $w_0 = (n, n-1, \ldots, 2, 1) \in \mathfrak{S}_n$, where $w_0$ is referred to as the \emph{longest word} because of the length of its reduced decomposition as a product of simple transpositions. 
In particular, while for classical parking functions there is always only one preference tuple that parks the cars in the order $w_0$ (i.e., the simplest sequence $1, 1, 1, \ldots$ consisting of repeated $1$s \cite[\href{https://oeis.org/A000012}{OEIS A000012}]{OEIS}), \cite[Theorem 4.1]{harris2023outcome} states that for $1$-cascading parking functions these counts for $n \geq 1$ are given by the Motzkin numbers \cite[\href{https://oeis.org/A001006}{OEIS A001006}]{OEIS}.
The $n$th Motzkin number counts, among various other combinatorial objects, the number of non-intersecting chords between $n$ points on a circle and the number of Motzkin paths of length $n$.
This surprising connection motivates the longest word example in Figure~\ref{fig:t-shirtexample} and, more generally, the enumerative results in this paper.

In \Cref{sec:equiv} we show that all cascading parking functions are equivalent as sets.
Conversely, in \Cref{sec:distinctness}, we treat cascading parking functions as distinct maps that depend on $k \geq 0$.
We derive a recursive, permutation pattern-based formula for the number of $k$-cascading parking functions with any given outcome $\pi \in \mathfrak{S}_n$.
When $\pi = w_0$, the formula reveals connections to both known and new integer sequences. 
In particular, when $k \geq n-1$ the number of such tuples is given by the Catalan numbers \cite[\href{https://oeis.org/A000108}{OEIS A000108}]{OEIS}, whereas when $k = 1$, the number of such tuples is given by the Motzkin numbers (recovering \cite[Theorem 4.1]{harris2023outcome}).
When $k=2$, another famous integer sequence appears, namely \cite[\href{https://oeis.org/A280891}{A280891}]{OEIS}, which counts noncrossing set partitions of $\{1, 2, \ldots, n+2\}$ with the property that $n$ and $n+1$ belong to the same block, and if $1$ also belongs to this block, then $n+2$ does as well. 
When $k=n-2$, the sequence initially appears to match with another known integer sequence, but a careful consideration of sufficiently large entries reveals that it in fact produces a new entry in the OEIS.
Finally, we generalize these results to layered permutations to obtain new combinatorial interpretations for the row sums of certain convolution triangles, including Motzkin \cite[\href{https://oeis.org/A358092}{OEIS A358092}]{OEIS} and Catalan \cite[\href{https://oeis.org/A088218}{OEIS A088218}]{OEIS}  convolution triangles.

\section{Equivalence as Sets}\label{sec:equiv}

For $k \geq 0$, let $\CPF_{n}(k)$ denote the set of $k$-cascading parking functions of length $n \geq 1$.
The following theorem extends the technique in \cite[Theorem~2.1]{harris2023outcome} to obtain a more general result.

Its proof introduces technical notation for the sake of mathematical precision, but its main idea is simple.
At any stage of the parking process, once a spot is occupied, it will remain so through the end of the parking process: bumping can only change which car, if any, occupies a given spot.
Similarly, which spot is newly occupied, if any, is the same whether or not bumping is allowed.

\begin{theorem}
\label{theorem: equivalence}
Let $n \geq 1$. 
Then, $\PF_n = \CPF_{n}(k)$ for all $k \geq 0$.
\end{theorem}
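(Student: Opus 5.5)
We will show that, for any fixed preference tuple $\alpha=(a_1,\ldots,a_n)\in[n]^n$ and any $k\ge 0$, the \emph{set} of occupied spots after each car enters is the same under the classical protocol and under the $k$-cascading protocol. Since $\alpha$ is a parking function (respectively a $k$-cascading parking function) exactly when every arriving car finds an unoccupied spot, this gives $\PF_n=\CPF_n(k)$ at once. The statement is an invariant of the parking process, so the proof is an induction on the number of cars that have entered.

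\textbf{Notation.} For $0\le i\le n$, let $S_i\subseteq[n]$ be the set of occupied spots after cars $1,\ldots,i$ have entered under the classical rule. Let $T_i$ be the corresponding set under the $k$-cascading rule, where the arrival of car $i$ together with its entire bumping cascade counts as a single step. If some car fails to park, we record the step as a failure. We will prove by induction on $i$ that, as long as no failure has occurred, $S_i=T_i$, and that car $i$ fails under one protocol if and only if it fails under the other. The base case $S_0=T_0=\emptyset$ is immediate.

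\textbf{Inductive step.} Assume $S_{i-1}=T_{i-1}=:U$. Classically, car $i$ parks in $s=\min\{j\ge a_i : j\notin U\}$, and fails if no such $j\le n$ exists. In the cascading process:
\begin{itemize}
\item If $a_i\notin U$, car $i$ parks at $a_i=s$ and no bump occurs.
\item Otherwise car $i$ takes $a_i$ and displaces its occupant. The displaced car tries $a_i+1$: if that spot is free it parks there, and if it is occupied and fewer than $k$ bumps have happened, it bumps again. After $k$ bumps the last displaced car drives on to the first free spot.
\end{itemize}
The key observation is that every move in this process goes through consecutive spots $a_i, a_i+1, a_i+2,\ldots$. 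Each spot in $U$ that is reached stays occupied, since only its occupant changes, and the cascade ends exactly when some car reaches the first spot $\ge a_i$ not in $U$. That spot is $s$. Hence $T_i=U\cup\{s\}=S_i$, and a failure (running past spot $n$) happens under one protocol exactly when $\{j\ge a_i\}\subseteq U$, which is also the classical failure condition.

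The only step that needs care is making this observation precise: we must check that whether a spot is taken by a bump or by ordinary forward driving, the sequence of spots tried is always the contiguous run starting at $a_i$, and that bumping never vacates a spot. We will handle this with a small inner induction on the position $a_i+t$ reached by the cascade at its $t$-th move. The inner invariant is that all of $a_i,\ldots,a_i+t-1$ lie in $U$ and are still occupied, and that exactly one car is currently unparked. With this invariant in place, the outer induction closes, and the two protocols have the same set of successful preference tuples.
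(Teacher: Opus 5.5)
Your proof is correct and follows essentially the same route as the paper. Both argue by induction on the number of cars that have arrived, showing that the set of occupied spots is identical under the classical and $k$-cascading rules: the newly occupied spot is the first free spot at or after $a_i$, and failure occurs under both rules exactly when no such spot exists. Your inner induction on the cascade spells out a step the paper asserts in a single line, and stopping at the first failure instead of tracking occupancy through failures, as the paper does, is harmless.
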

\begin{proof}
If $k = 0$, then $\PF_n = \CPF_{n}(0)$ by definition.

Now, suppose $k > 1$ and consider any preference tuple $\alpha = (a_1, a_2, \ldots, a_n) \in [n]^n$.
For each $0 \leq i \leq n$, let $\chi_k^i(\alpha) \in \{0,1\}^n$ denote the binary content of spot occupancy after the arrival of car $i$ under $\alpha$ and the $k$-cascading parking rule.
That is, for each $1 \leq j \leq n$, let $\left(\chi_k^i(\alpha)\right)_j = 1$ if and only if spot $j$ is occupied after the arrival of car~$i$.
We adopt the notational convention that $\chi_k^0(\alpha) = \left(0, 0, \ldots, 0\right)$.

We first show that $\chi_k^i(\alpha)  = \chi_0^i(\alpha)$ for all $0 \leq i \leq n$.
We prove this by induction on $i$.
For the base case, note that $\chi_k^0(\alpha)  = \chi_0^0(\alpha)$.
Now, by way of induction, suppose that $\chi_k^i(\alpha)  = \chi_0^i(\alpha)$ for some $0 \leq i < n$.
We need to show that $\chi_k^{i+1}(\alpha)  = \chi_0^{i+1}(\alpha)$.
Upon the arrival of car $i + 1$ with preference $a_{i + 1}$, there are two possibilities:
\begin{itemize}
    \item 
    First, suppose $\left(\chi_k^{i}(\alpha)\right)_{j}  = \left(\chi_0^{i}(\alpha)\right)_{j} = 1$ for all $a_{i + 1} \leq j \leq n$.
    Then, under either parking rule, one of car $i$ or those in this segment is unable to park and no entries change with respect to $\chi_k^{i}(\alpha)  = \chi_0^{i}(\alpha)$.
    \item
    Conversely, suppose $\left(\chi_k^{i}(\alpha)\right)_{j}  = \left(\chi_0^{i}(\alpha)\right)_{j} = 0$ for some index $j$ satisfying $a_{i + 1} \leq j \leq n$ and let $j^*$ be the smallest such index.
    Then, under either parking rule, one of car $i$ or those in this segment parks in spot $j^*$ and $\left(\chi_k^{i+1}(\alpha)\right)_{j^*}  = \left(\chi_0^{i + 1}(\alpha)\right)_{j^*} = 1$. No other entries change with respect to $\chi_k^{i}(\alpha)  = \chi_0^{i}(\alpha)$.
\end{itemize}
This completes the inductive step.

Next, note that $\alpha \in \PF_n$ if and only if $\chi_0^n(\alpha) = \left(1, 1, \ldots, 1\right)$.
Similarly, $\alpha \in \CPF_n(k)$ if and only if $\chi_k^n(\alpha) = \left(1, 1, \ldots, 1\right)$.
Since $\chi_k^n(\alpha) = \chi_0^n(\alpha)$, it follows that $\alpha \in \PF_n$ if and only if $\alpha \in \CPF_n(k)$.
\end{proof}
If $k = 1$, then Theorem~\ref{theorem: equivalence} recovers \cite[Theorem~2.1]{harris2023outcome}.
The next corollaries follow readily from classical results about $\PF_n$; refer to \cite{martinezmori2025what} for an accessible introduction.

\begin{corollary}
For all $n \geq 1$ and $k \geq 0$, $|\CPF_n(k)| = (n+1)^{n-1}$. 
\end{corollary}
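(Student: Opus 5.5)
The plan is to reduce the statement to the classical count of parking functions by way of Theorem~\ref{theorem: equivalence}. That theorem gives the set equality $\CPF_n(k) = \PF_n$ for every $k \geq 0$, so $|\CPF_n(k)| = |\PF_n|$. It then suffices to recall, or reprove, the classical fact $|\PF_n| = (n+1)^{n-1}$ due to Konheim and Weiss.

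For completeness I would include Pollak's circular argument for the classical count. Place $n+1$ spots on a circle and let $n$ cars have preferences in $[n+1]$, with a car that finds its spot taken continuing clockwise. Every one of the $(n+1)^n$ preference tuples parks all cars and leaves exactly one spot empty. Adding $1 \pmod{n+1}$ to every preference rotates the final configuration, including the empty spot, by one position. The group $\mathbb{Z}/(n+1)$ therefore acts freely on preference tuples, and each orbit of size $n+1$ contains exactly one tuple whose empty spot is $n+1$.

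Those tuples are exactly the classical parking functions of length $n$. If spot $n+1$ stays empty, no car ever wraps around, so the circular process coincides with the linear one on $[n]$. Conversely, a parking function never uses spot $n+1$. Hence $|\PF_n| = (n+1)^n/(n+1) = (n+1)^{n-1}$, and combining this with Theorem~\ref{theorem: equivalence} gives the corollary.

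There is essentially no obstacle. The only point needing care is checking that the equivalence theorem covers all $k \geq 0$. Its proof treats $k=0$ by definition and the remaining cases by the occupancy-vector induction. The written text says ``$k>1$'', but the induction applies verbatim to $k=1$, and that case is also \cite[Theorem~2.1]{harris2023outcome}. So the set equality, and hence the count, holds uniformly in $k$.
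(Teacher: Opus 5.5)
Your proposal is correct and follows the paper's route. The set equality $\PF_n = \CPF_n(k)$ from Theorem~\ref{theorem: equivalence} reduces the count to the classical Konheim--Weiss enumeration, which the paper simply cites and you reprove via Pollak's circular argument; your remark that the ``$k>1$'' in that theorem's proof should read $k \geq 1$, since the induction covers $k=1$ verbatim, is also accurate.
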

\begin{corollary}
Let $n \geq 1$ and $k \geq 0$.
Let $\alpha = (a_1, a_2, \ldots, a_n) \in [n]^n$ and $\alpha' = (a_1', a_2', \ldots, a_n')$ be its weakly increasing rearrangement.
Then, $\alpha \in \CPF_n(k)$ if and only if $a_i' \leq i$ for all $1 \leq i \leq n$.
\end{corollary}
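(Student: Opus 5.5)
By Theorem~\ref{theorem: equivalence}, $\CPF_n(k)=\PF_n$ as sets. The plan is therefore to transfer the classical characterization of parking functions through this set equality, proving the classical characterization directly so the argument is self-contained.

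Fix $\alpha\in[n]^n$ with weakly increasing rearrangement $\alpha'$. Theorem~\ref{theorem: equivalence} reduces the claim to showing that $\alpha\in\PF_n$ if and only if $a_i'\le i$ for all $i$. Since the occupancy vectors $\chi_0^i(\alpha)$ only record which spots are filled, I will work with the classical rule ($k=0$) throughout.

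For necessity, suppose $a_i'>i$ for some $i$. Then at least $n-i+1$ cars prefer spots in $\{i+1,\ldots,n\}$. Under the classical rule cars only move forward from their preference, so all of these cars must park in $\{i+1,\ldots,n\}$. That set has only $n-i$ spots, so some car fails to park and $\alpha\notin\PF_n$.

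For sufficiency, assume $a_i'\le i$ for all $i$, and suppose some spot $j$ is empty at the end. No car ever passed spot $j$, since a car passing $j$ would have parked there. Hence every car preferring a spot $\le j-1$ parked in $\{1,\ldots,j-1\}$, so at most $j-1$ cars have $a_m\le j-1$. But $a_1'\le\cdots\le a_j'\le j$, and by the pigeonhole count all $n$ cars park in at most $n-1$ spots only if some car fails, which I must rule out.

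To get a clean contradiction, I will instead count cars with preference $\le j$. These number at least $j$, because $a_j'\le j$. None of them can park beyond spot $j$ without passing $j$, and spot $j$ is empty, so all of them park in $\{1,\ldots,j-1\}$. That places at least $j$ cars in $j-1$ spots, a contradiction. So every spot is filled, all $n$ cars park, and $\alpha\in\PF_n$.

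The only delicate point is the claim that cars preferring spots $\le j$ park in spots $<j$ when $j$ stays empty. This holds because under the classical rule a car scans forward from its preference and parks in the first free spot. Spot $j$ is free at every time, since occupied spots never empty, so such a car always stops at or before $j$. Combined with Theorem~\ref{theorem: equivalence}, this completes the proof.
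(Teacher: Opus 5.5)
Your proof is correct and follows the paper's route. Theorem~\ref{theorem: equivalence} reduces the claim to the classical characterization of $\PF_n$, which the paper simply cites and you reprove correctly: forward-only movement gives necessity, and for sufficiency the contradiction is that at least $j$ cars with preference $\le j$ must fit into spots $1,\ldots,j-1$. You should delete the abandoned false start in the middle of your sufficiency paragraph.
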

\begin{corollary}
Let $\CPF_n^{\uparrow}(k)$ be the set of weakly increasing $k$-cascading parking functions of length $n$. Then
$|\CPF_n^{\uparrow}(k)| = C_n$ for all $n \geq 1$ and $k \geq 0$, where   $C_n = \frac{1}{n+1}\binom{2n}{n}$ is the $n$th Catalan number~\cite[\href{https://oeis.org/A000108}{OEIS A000108}]{OEIS}.
\end{corollary}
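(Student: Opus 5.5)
By Theorem~\ref{theorem: equivalence}, $\CPF_n^{\uparrow}(k) = \PF_n^{\uparrow}$ for every $k \geq 0$, where $\PF_n^{\uparrow}$ denotes the set of weakly increasing classical parking functions of length $n$. Indeed, being weakly increasing is a property of the tuple $\alpha$ alone, and membership in $\PF_n$ and in $\CPF_n(k)$ coincide. It therefore suffices to show $|\PF_n^{\uparrow}| = C_n$, which is independent of $k$.

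For this I would use the previous corollary. A weakly increasing tuple is its own weakly increasing rearrangement, so $\alpha = (a_1, \ldots, a_n)$ lies in $\CPF_n^{\uparrow}(k)$ if and only if
\[
1 \leq a_1 \leq a_2 \leq \cdots \leq a_n \quad\text{and}\quad a_i \leq i \text{ for all } 1 \leq i \leq n.
\]
Next I would map such sequences bijectively to Dyck paths of semilength $n$. Read the values $1, 2, \ldots, n$ in order. For each value $v$, take one up-step for each $i$ with $a_i = v$, followed by a single down-step. Because $\alpha$ is weakly increasing, the multiplicities of the values determine $\alpha$, so this encoding is injective. The resulting path has $n$ up-steps and $n$ down-steps.

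The only step needing care is that the lattice-path condition matches $a_i \leq i$. After the down-step for value $v$, the height of the path equals $\#\{i : a_i \leq v\} - v$. This height is nonnegative for every $v$ exactly when $\#\{i : a_i \leq v\} \geq v$ for all $v$. For a weakly increasing sequence, that counting condition is equivalent to $a_v \leq v$ for all $v$. So valid sequences correspond exactly to paths staying weakly above the axis, that is, Dyck paths, and these are counted by $C_n = \frac{1}{n+1}\binom{2n}{n}$. Alternatively, one can simply cite the classical fact that weakly increasing parking functions are counted by Catalan numbers, as in \cite{martinezmori2025what}.
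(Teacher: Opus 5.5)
Your proposal is correct and matches the paper's approach. The paper reduces to classical parking functions via Theorem~\ref{theorem: equivalence} and then cites the classical Catalan count for weakly increasing parking functions. You do the same, and additionally write out the standard Dyck path bijection, which is correct.
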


Theorem~\ref{theorem: equivalence} establishes the equivalence of parking functions and cascading parking functions \emph{as sets}.
However, they are generally distinct in their outcome maps.
We consider this phenomenon in the next section.

\section{Distinctness as Maps}\label{sec:distinctness}

We begin with a general recursive formula for counting the fiber of the outcomes of $k$-cascading parking functions in terms of permutation patterns that depend on $k$. 
That is, we fix a possible outcome and count the number of $k$-cascading parking functions of length $n$ with that outcome.

We first introduce some notation.
In what follows, let $\pat(w)$ denote the permutation pattern of a word $w$ and let $w_1 ; w_2$ denote the concatenation of two words $w_1$ and $w_2$ with no shared or repeated entries.
For example, $325;17 = 32517$ is a word of length $5$ and largest entry $7$, and therefore $\pat(325;17) = \pat(32517) = 32415$, a permutation of length $5$.
Going forward, we write patterns as tuples rather than in one-line notation, so in the previous example $\pat(32517) = (3,2,4,1,5)$.
Also, for any word $w$ and indices $1 \leq i, j \leq n$, let $w_{i:j} = w_i, \ldots, w_j$ with the convention that $w_{i:j} = \emptyset$, i.e., the empty word, if $i > j$.
For example, if $w = 32517$, then $w_{3:5} = 517$ and $w_{3:2} = \emptyset$.

The following is our main technical result.
\begin{theorem}
\label{theorem: general recursive}
Let $n, k \geq 1$.
Let $\pi = (\pi_1, \pi_2, \ldots, \pi_n) \in \mathfrak{S}_n$ and $i^* = \pi^{-1}(n)$.
Then,
\begin{equation*}
    |\out^{-1}_{\CPF_n(k)}(\pi)| = 
    \sum_{i = i^*}^n 
    |\out^{-1}_{\CPF_{i-1}(k)}(\alpha(\pi, k, i^*, i))| \cdot |\out^{-1}_{\CPF_{n - i}(k)}(\beta(\pi, k, i^*, i))|
\end{equation*}
where
\begin{equation*}
    \alpha(\pi, k, i^*, i) = 
    \begin{cases}
    \pat(\pi_{1 : i^* - 1} ; \pi_{i^* + 1:i}), & \textrm{ if } i \leq i^* + k, \\
    \pat(\pi_{1 : i^* - 1} ; \pi_{i^* + 1:i^* + k - 1} ; \pi_i ; \pi_{i^* + k: i - 1}), & \textrm{ if } i > i^* + k
    \end{cases}
\end{equation*}
and $\beta(\pi, k, i^*, i) = \pat(\pi_{i + 1:n})$.
\end{theorem}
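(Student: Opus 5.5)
The plan is to condition on the behaviour of the last car. In the outcome $\pi$, car $n$ ends in spot $i^* = \pi^{-1}(n)$. Since $k \geq 1$, an arriving car always parks in its preferred spot: if the spot is occupied it bumps the occupant. So every $\gamma = (a_1,\ldots,a_n) \in \out^{-1}_{\CPF_n(k)}(\pi)$ has $a_n = i^*$. By Theorem~\ref{theorem: equivalence}, $(a_1,\ldots,a_{n-1})$ leaves exactly one spot empty after car $n-1$ arrives; call it $i$. Car $n$'s cascade starts at $i^*$, moves only rightward, and must fill spot $i$, so $i^* \leq i \leq n$. Conversely, any configuration whose empty spot satisfies $i \geq i^*$ can be completed by $a_n = i^*$. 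I will partition the fiber by the value of $i$ and show that the $i$-th block is in bijection with $\out^{-1}_{\CPF_{i-1}(k)}(\alpha(\pi,k,i^*,i)) \times \out^{-1}_{\CPF_{n-i}(k)}(\beta(\pi,k,i^*,i))$.

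\emph{Step 1: the configuration before car $n$.} Fix $i$ and read the cascade backwards. The cars in final spots $1,\ldots,i^*-1$ and $i+1,\ldots,n$ were never moved by car $n$. If $i \leq i^*+k$, the cascade is a pure right shift of the cars in spots $i^*,\ldots,i-1$ into spots $i^*+1,\ldots,i$. Hence the cars in spots $1,\ldots,i-1$ before car $n$ arrived were $\pi_{1:i^*-1};\pi_{i^*+1:i}$.

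If $i > i^*+k$, the first $k-1$ displaced cars shift right by one. The $k$-th displaced car, originally in spot $i^*+k-1$, cannot bump again. It therefore skips the occupied spots $i^*+k,\ldots,i-1$, whose cars stay put, and lands in $i$. Reading this back, the cars in spots $1,\ldots,i-1$ were $\pi_{1:i^*-1};\pi_{i^*+1:i^*+k-1};\pi_i;\pi_{i^*+k:i-1}$. In both cases the word matches the one defining $\alpha$, and the cars in spots $i+1,\ldots,n$ read $\pi_{i+1:n}$. The pre-configuration is thus determined by $(\pi,i)$, and conversely it determines $\pi$ once $a_n = i^*$ is appended.

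\emph{Step 2: decoupling across the empty spot.} Consider the first $n-1$ cars with spot $i$ left empty throughout. Every car, whether arriving or displaced, moves only rightward and takes the first spot it may legally take. Two consequences follow.
\begin{itemize}
\item No car ever occupies a spot $> i$ starting from a spot or preference $< i$, since it would first reach the empty spot $i$ and park there.
\item No car ever enters a spot $< i$ from the right.
\end{itemize}
So the cars ending in $[1,i-1]$ (the prefix cars) have preferences in $[1,i-1]$, and the suffix cars have preferences in $[i+1,n]$. A bump always stays within one block, because cascades never pass through $i$.

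It follows that the process restricted to the prefix cars, in their arrival order, is exactly the $k$-cascading process on $i-1$ spots. Likewise, the suffix cars with preferences shifted down by $i$ run the $k$-cascading process on $n-i$ spots. The outcomes are the standardizations $\alpha(\pi,k,i^*,i)$ and $\beta(\pi,k,i^*,i)$. Relabeling by $\pat$ is harmless because only the relative arrival order matters.

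Conversely, pick any tuple in each of the two smaller fibers. Interleave them according to the car labels fixed by $\pi$, shift the suffix preferences up by $i$, and append $a_n = i^*$. The same non-interaction argument shows this produces outcome $\pi$, with spot $i$ being the last one filled. The two maps are inverse, giving the product for each $i$, and summing over $i^* \leq i \leq n$ gives the formula. The convention that empty fibers count $1$ handles $i=1$ and $i=n$.

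The main obstacle is Step 2, specifically the claim that interleaving prefix and suffix arrivals has no effect. I would state it as a lemma: for any spot $s$ that stays empty through the first $m$ arrivals, the restriction of the $k$-cascading process to cars with preference $< s$ coincides with the process run on those cars alone, and similarly for cars with preference $> s$. I would prove the lemma by induction on arrivals, tracking the occupancy vectors as in the proof of Theorem~\ref{theorem: equivalence}. The bookkeeping in Step 1 for the boundary index $i = i^*+k$, where the $k$-th displaced car lands directly in $i$, should also be checked to confirm it falls in the first case.
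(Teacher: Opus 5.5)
Your proposal is correct and follows the same route as the paper. You force $a_n=i^*$, condition on the unique empty spot $i\ge i^*$ just before car $n$ arrives, and undo the cascade in the two cases $i\le i^*+k$ and $i>i^*+k$. You then split the first $n-1$ arrivals into independent processes on either side of spot $i$. Your Step 2 (cars move only rightward, so nothing crosses the permanently empty spot, together with the explicit inverse map) makes rigorous the independence that the paper asserts in one line.
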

\begin{proof}
Let $\alpha = (a_1, \ldots, a_n) \in \out_{\CPF_{n}(k)}^{-1}(\pi)$.
Since $k \geq 1$, car $n$ parks in its preferred spot, i.e., $a_n = i^*$.
Therefore, upon the arrival of car $n$, there is exactly one empty spot $i^* \leq i \leq n$.
This empty spot leads to independence between the parking processes that take place to its left and to its right.
We condition on its precise index.

We first consider the parking process to the left of the empty spot.
For any such $i^* \leq i \leq n$, there are two mutually exclusive possibilities:
\begin{itemize}
    \item 
    $i \leq i^* + k$.
    In this case, upon its arrival, car $n$ parks in spot $i^*$ and forms a cascade involving the $i - i^* \leq k$ cars already parked in spots $i^*, \ldots, i - 1$.
    Since the parking process terminates immediately after with final outcome $\pi$, it must be that these $i - i^*$ cars were $\pi_{i^* + 1 : i}$, that the cars parked before spot $i^*$ were $\pi_{1:i^*-1}$, and that in conjunction they were already parked in the relative order $\alpha(\pi, k, i^*, i) = \pat(\pi_{1 : i^* - 1} ; \pi_{i^* + 1:i})$.
    There is only one way to choose this set of cars and $\left|\out^{-1}_{\CPF_{i - 1}(k)} \alpha(\pi, k, i^*, i) \right|$ distinct preference sub-tuples that would lead them to park as required.
    \item 
    $i > i^* + k$.
    In this case, upon its arrival, car $n$ parks in spot $i^*$ and forms a cascade involving the first $k < i - i^*$ of the $i-i^*$ cars already parked in spots $i^*, \ldots, i - 1$.
    Since the parking process terminates immediately after with final outcome $\pi$, it must be that these $i - i^*$ cars were $\pi_{i^* + 1:i^* + k - 1} ; \pi_i ; \pi_{i^* + k: i - 1}$, that the cars parked before spot $i^*$ were $\pi_{1:i^*-1}$, and that in conjunction they were already parked in the relative order 
    \begin{equation*}
        \alpha(\pi, k, i^*, i) = \pat(\pi_{1 : i^* - 1} ; \pi_{i^* + 1:i^* + k - 1} ; \pi_i ; \pi_{i^* + k: i - 1}).
    \end{equation*}
    In this way, upon the arrival of car $n$, each car in $\pi_{i^* + 1:i^* + k - 1}$ displaces one spot to its right while car $\pi_i$ displaces $i-(i^* + k - 1)$ spots to its right; into spot $i$.
    There is only one way to choose this set of cars and $\left|\out^{-1}_{\CPF_{i - 1}(k)} \alpha(\pi, k, i^*, i) \right|$ distinct preference sub-tuples that would lead them to park as required.
\end{itemize}

We now consider the parking process to the right of the empty spot.
For any such $i^* \leq i \leq n$, the $n - i$ cars already parked in spots $i + 1, \ldots, n$ upon the arrival of car $n$ remain unaffected.
Since the parking process terminates immediately after with final outcome $\pi$, it must be that these $n - i$ cars were $\pi_{i + 1: n}$, and that they were already parked in the relative order $\beta(\pi, k, i^*, i) = \pat(\pi_{i + 1:n})$.
There is only one way to choose this set of cars and $|\out^{-1}_{\CPF_{n - i}(k)}\beta(\pi, k, i^*, i)|$ distinct preference sub-tuples that would lead them to park as required.

We obtain the formula by summing over $i^* \leq i \leq n$ and multiplying the enumerations for the parking processes to the left and right for the corresponding sub-cases.
\end{proof}

In the remainder of this section we specialize Theorem~\ref{theorem: general recursive} to special families of outcome permutations.
We begin with the most straightforward case, in which the outcome $\epsilon = (1, 2, \ldots, n-1, n) \in \mathfrak{S}_n$ is the identity.

\begin{corollary}
For any $n \geq 1$ and $k \geq 0$, let $\epsilon = (1, 2, \ldots, n) \in \mathfrak{S}_n$ denote the identity permutation and $g_{n,k} =|\out^{-1}_{\CPF_n(k)}(\epsilon)|$ denote the size of its fiber under the $k$-cascading parking rule.
Then, 
\begin{equation*}
    g_{n,k} = 
    \begin{cases}
        n!, & \mbox{if $k=0$} \\
        1, & \mbox{if $k\geq1$.}
    \end{cases}
\end{equation*}
\end{corollary}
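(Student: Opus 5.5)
The plan is to treat the two regimes separately: for $k=0$ we argue directly about classical parking functions, and for $k\geq 1$ we apply Theorem~\ref{theorem: general recursive} and induct on $n$.

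\emph{The case $k=0$.} Here $\CPF_n(0)=\PF_n$ and no bumping occurs. We claim that $\alpha$ parks in the identity outcome if and only if $a_j\le j$ for every $j$, which gives exactly $\prod_{j=1}^n j=n!$ tuples. We prove the claim by induction on the arrival order. Suppose cars $1,\dots,j-1$ occupy spots $1,\dots,j-1$. When car $j$ arrives, it parks in spot $j$ exactly when $a_j\le j$: if $a_j<j$ it drives forward past the occupied spots to spot $j$, and if $a_j=j$ it parks there directly. If $a_j>j$, it parks at $a_j\neq j$. This matches the product formula of \cite[Proposition 3.3]{colmenarejo2021counting}, which we may cite instead.

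\emph{The case $k\geq 1$.} We induct on $n$ using Theorem~\ref{theorem: general recursive}, with the convention that the fiber of the empty permutation in $\CPF_0(k)$ has size $1$.

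1. For $n=1$ the fiber is $\{(1)\}$.
2. For $n\ge 2$ we have $i^*=\pi^{-1}(n)=n$, so the sum has the single term $i=n$.
3. Since $i=n\le i^*+k$, the first case of the formula applies. Thus $\alpha=\pat(\epsilon_{1:n-1})$ is the identity in $\mathfrak S_{n-1}$, and $\beta$ is the empty pattern.
4. The formula therefore reduces to $g_{n,k}=g_{n-1,k}\cdot 1$, and induction gives $g_{n,k}=1$.

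As a sanity check we can also argue directly. Car $n$ must prefer spot $n$. If car $n$ parks last and all spots $1,\dots,n-1$ are full, then spot $n$ is the only empty spot, so nothing is bumped. The preference of car $n$ is therefore forced, and the tuple is $(1,2,\dots,n)$.

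The only subtle point is the boundary convention for empty fibers, $\CPF_0$, in the recursion. We handle it by stating the convention explicitly, or equivalently by using the direct argument for $k\ge1$, which avoids it.
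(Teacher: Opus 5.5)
Your proposal is correct and matches the paper's proof: for $k=0$ you run the same induction (cars $1,\dots,j-1$ fill spots $1,\dots,j-1$, so any $a_j\in[j]$ works, giving $n!$), and for $k\ge1$ you apply Theorem~\ref{theorem: general recursive} with $i^*=n$, so only the term $i=n$ survives and $g_{n,k}=g_{n-1,k}$, with the empty-fiber convention stated explicitly. In your optional direct check, the claim that spot $n$ is empty when car $n$ arrives needs a reason: otherwise the car bumped from spot $n$ would leave the street. The main argument does not depend on this.
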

\begin{proof}
Consider any $\alpha = (a_1, a_2, \ldots, a_n) \in \out^{-1}_{\CPF_n(k)}(\epsilon)$ and note that $\epsilon^{-1}(n) = n$.
If $k = 0$, then spots $1, 2, \ldots, n - 1$ were occupied by cars  $1, 2, \ldots, n - 1$ upon the arrival of car $n$, $1 \leq a_n \leq n$, and the statement holds inductively.
If $k \geq 1$, then $i^* = a_n = n$ in Theorem~\ref{theorem: general recursive} and the statement holds inductively.
\end{proof}

We now specialize Theorem~\ref{theorem: general recursive} to the case in which the outcome $w_0 = (n, n - 1, \ldots, 2, 1) \in \mathfrak{S}_n$ is the longest word. As we show, this outcome pattern facilitates recursive arguments that yield interesting enumerations.

\begin{corollary}
\label{corollary: longest recursive}
For any $n, k \geq 1$, let $w_0 = (n, n - 1, \ldots, 2, 1) \in \mathfrak{S}_n$ denote the longest word and $f_{n,k} =|\out^{-1}_{\CPF_n(k)}(w_0)|$ denote the size of its fiber under the $k$-cascading parking rule.
Then, 
\begin{equation*}
    f_{n, k} 
    = \sum_{i = 1}^{1 + k} f_{i-1, k} f_{n - i, k} + \sum_{i = 2 + k}^n \left|\out^{-1}_{\CPF_{i - 1}(k)}(\sigma_{i - 1}(k)) \right| f_{n-i, k},
\end{equation*}
where $f_{n, k} = 0$ for all $n < 0$, $f_{0,k} = f_{1,k} = 1$, and
\begin{equation*}
    \sigma_{i-1}(k) = (\underbrace{i - 1, i - 2, \cdots, i - k + 1}_{k - 1 \text{ decreasing terms }}, 1, \underbrace{i - k, i - k - 1, \cdots, 2}_{i - 1 - k \text{ decreasing terms }}) \in \mathfrak{S}_{i - 1}.
\end{equation*}
\end{corollary}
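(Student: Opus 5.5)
We apply Theorem~\ref{theorem: general recursive} directly with $\pi = w_0$. Here $n$ sits in the first position, so $i^* = \pi^{-1}(n) = 1$ and the sum runs over $1 \leq i \leq n$. The task is to identify, for each $i$, the two patterns $\alpha(w_0,k,1,i)$ and $\beta(w_0,k,1,i)$. We split the sum at the threshold $i \leq i^*+k = 1+k$ that appears in the theorem.

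The factor $\beta$ is the same in every term. We have $\beta(w_0,k,1,i) = \pat(\pi_{i+1:n}) = \pat(n-i, n-i-1, \ldots, 1)$, which is the longest word in $\mathfrak{S}_{n-i}$. Its fiber therefore has size $f_{n-i,k}$, with the convention $f_{0,k}=1$ for the empty word when $i=n$.

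For $i \leq 1+k$, the first case of the theorem applies. Since $\pi_{1:0}$ is empty, $\alpha = \pat(\pi_{2:i}) = \pat(n-1, \ldots, n-i+1)$, which is the longest word in $\mathfrak{S}_{i-1}$. Its fiber has size $f_{i-1,k}$. If $1+k > n$, the terms with $i>n$ are simply absent, and the convention $f_{m,k}=0$ for $m<0$ makes the first sum $\sum_{i=1}^{1+k}$ correct as written. These terms give the first sum.

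For $i > 1+k$, that is $2+k \leq i \leq n$, the second case gives
\[
\alpha = \pat(\pi_{2:k} ; \pi_i ; \pi_{k+1:i-1}) = \pat(n-1,\ldots,n-k+1,\; n-i+1,\; n-k,\ldots,n-i+2).
\]
This word has $i-1$ entries, all lying in $\{n-i+1,\ldots,n-1\}$. We standardize by subtracting $n-i$ from each entry. The first block becomes $i-1, \ldots, i-k+1$, which consists of $k-1$ decreasing terms. The entry $\pi_i$ becomes $1$. The last block becomes $i-k, \ldots, 2$, which consists of $i-1-k$ decreasing terms. This is exactly $\sigma_{i-1}(k)$, and these terms give the second sum.

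Finally, the base values hold as stated. For $n=1$ there is only the identity, reached by the single tuple $(1)$, so $f_{1,k}=1$; $f_{0,k}=1$ is the empty count. The main obstacle is just careful index bookkeeping in the second case. The degenerate case $k=1$ deserves a check: the block $\pi_{2:k}$ is empty, so $\sigma_{i-1}(1) = (1, i-1, \ldots, 2)$, and the formula above still covers it. One should also confirm that the recursion is consistent with the convention $f_{n,k}=0$ for negative indices when $k \geq n$.
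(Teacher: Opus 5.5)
Your proposal is correct and follows the same route as the paper. You specialize Theorem~\ref{theorem: general recursive} to $\pi = w_0$ with $i^* = 1$ and split the sum at $i = 1+k$; you then identify $\alpha$ as the longest word in $\mathfrak{S}_{i-1}$ (for $i \leq 1+k$) or as $\sigma_{i-1}(k)$ (for $i \geq 2+k$), and $\beta$ as the longest word in $\mathfrak{S}_{n-i}$. Your explicit standardization (subtracting $n-i$) and your remark that the extra terms with $i > n$ vanish when $k \geq n$ are slightly more careful than the paper's write-up.
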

\begin{proof}
Since $w_0$ is the longest word and $k \geq 1$, we have that $i^* = 1$ in Theorem~\ref{theorem: general recursive}.
Therefore, the sum formula is over $1 \leq i \leq n$.
Now, to simplify the term $|\out^{-1}_{\CPF_{i-1}(k)}(\alpha(\pi, k, i^*, i))|$, where in this case $\pi = w_0$, we consider the two mutually exclusive cases:
\begin{itemize}
    \item 
    $i \leq 1 + k$.
    In this case, $\alpha(w_0, k, i^*, i) = (i - 1, i - 2, \ldots, 2, 1) \in \mathfrak{S}_{i - 1}$ so that $|\out^{-1}_{\CPF_{i - i}(k)}(\alpha(w_0, k, i^*, i))| = f_{i - 1, k}$.
    \item 
    $i > 1 + k$.
    In this case, $\alpha(w_0, k, i^*, i) = \sigma_{i-1}(k) \in \mathfrak{S}_{i - 1}$.
\end{itemize}
Similarly, to simplify the term $|\out^{-1}_{\CPF_{n - i}(k)}(\beta(\pi, k, i^*, i))|$, where in this case $\pi = w_0$, note that $\beta(w_0, k, i^*, i) = (n - i, n - i - 1, \ldots, 2, 1) \in \mathfrak{S}_{n-1}$ so that 
\begin{equation*}
    |\out^{-1}_{\CPF_{n - i}(k)}(\beta(w_0, k, i^*, i))| = f_{n - i, k}.
\end{equation*}
\end{proof}

To further simplify Corollary~\ref{corollary: longest recursive}, we need to get a handle on 
\begin{equation}
\label{eq: ugly}
    \left|\out^{-1}_{\CPF_{i - 1}(k)}(\sigma_{i-1}(k)) \right|.
\end{equation}
Unfortunately, expressing \eqref{eq: ugly} recursively as a function of the fiber of (smaller) longest words becomes unwieldy for intermediate values of $k$. 
The reason for this is that the outcome patterns that could be further derived in the style of Theorem~\ref{theorem: general recursive} involve an increasingly complicated arrangement of decreasing runs.
For example, the pattern $\sigma_{i-1}(k)
$ in Corollary~\ref{corollary: longest recursive} already involves the concatenation of two decreasing runs.

However, for values of $k$ at the extremes\textemdash either very small or very large\textemdash we recover various known integer sequences.
In particular, these sequences interpolate between the simplest sequence ($k=0$), the Motzkin numbers ($k = 1$), and the Catalan numbers ($k\geq n-1$).
We summarize these and other counts next.
\begin{corollary}
\label{corollary: counts}
For any $n \geq 1$ and $k \geq 0$, let $w_0 = (n, n - 1, \ldots, 2, 1) \in \mathfrak{S}_n$ denote the longest word and $f_{n,k} =|\out^{-1}_{\CPF_n(k)}(w_0)|$ denote the size of its fiber under the $k$-cascading parking rule.
Then:
\begin{enumerate}
    \item 
    $f_{n, 0} = 1$ for all $n \geq 1$.
    This is the simplest sequence \cite[\href{https://oeis.org/A000012}{OEIS A000012}]{OEIS}.
    \item 
    $f_{n, 1} = M_n = \sum_{\ell = 0}^{\lfloor n/2 \rfloor} \frac{1}{\ell + 1} \binom{n}{2 \ell}\binom{2\ell}{\ell}$ for all $n \geq 1$.
    These are the Motzkin numbers \cite[\href{https://oeis.org/A001006}{OEIS A001006}]{OEIS}, whose first few terms for $n \geq 1$ are 
    \begin{equation*}
        1, 2, 4, 9, 21, 51, 127, 323, 835, 2188, \ldots.
    \end{equation*}
    This recovers \cite[Theorem 4.1]{harris2023outcome}.
    \item 
    For all $n \geq 1$ we have
    \begin{equation*}
        f_{n, 2} 
        = 
        f_{n - 1, 2} + f_{n - 2, 2} + 2 \sum_{i=0}^{n-3}f_{i,2}f_{n-3-i,2} + \sum_{i=0}^{n-4} \left(\sum_{j=0}^i f_{j,2} f_{i-j,2}\right) f_{n-4-i,2},
    \end{equation*}
    where $f_{0,2} = 1$.
    
    This is an offset of \cite[\href{https://oeis.org/A101785}{A101785}]{OEIS}, whose first few terms for $n \geq 2$ are
    \begin{equation*}
        1, 2, 5, 12, 30, 79, 213, 584, 1628, 4600, \ldots.
    \end{equation*}
    
    \item 
    $f_{n, n - 2} = C_n - C_{n-2}$ for all $n \geq 2$.
    This is an offset of \cite[\href{https://oeis.org/A280891}{A280891}]{OEIS}, whose first few terms for $n \geq 1$ are
    \begin{equation*}
        1, 4, 12, 37, 118, 387, 1298, 4433, 15366, 53924, \ldots.
    \end{equation*}
    \item 
    $f_{n, k} = C_n$ for all $n \geq 1$ and $k \geq n - 1$.
    These are the Catalan numbers \cite[\href{https://oeis.org/A000108}{A000108}]{OEIS}, whose first few terms for $n \geq 1$ are
    \begin{equation*}
        1, 2, 5, 14, 42, 132, 429, 1430, 4862, 16796, \ldots.
    \end{equation*}
\end{enumerate}
\end{corollary}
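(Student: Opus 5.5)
Throughout I use the recursion of Corollary~\ref{corollary: longest recursive} (and Theorem~\ref{theorem: general recursive} for the auxiliary patterns), together with one small reduction lemma that I would prove first.

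\emph{Lemma (leading $1$).} If $k\ge1$ and $\pi\in\mathfrak{S}_m$ has $\pi_1=1$, then $|\out^{-1}_{\CPF_m(k)}(\pi)|=|\out^{-1}_{\CPF_{m-1}(k)}(\pat(\pi_{2:m}))|$. The argument is as follows. Cars only ever move to the right. Car $1$ arrives first and must end in spot $1$, so it parks there immediately, i.e.\ $a_1=1$. If any later car preferred spot $1$, it would bump car $1$ rightward, and car $1$ could never return. Hence no later car prefers spot $1$. The remaining process is then an ordinary $k$-cascading process on spots $2,\dots,m$, which gives the bijection.

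\textbf{Part (1).} Here I argue directly. With $k=0$ no car moves once parked, so car $j$ parks in spot $n-j+1$. When it arrives, spots $1,\dots,n-j$ are empty, so it must prefer exactly spot $n-j+1$. Hence the preference tuple is unique.

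\textbf{Part (5).} I induct on $n$. If $k\ge n-1$, then every $i\le n\le 1+k$, so the second sum in Corollary~\ref{corollary: longest recursive} is empty. The remaining sum is $f_{n,k}=\sum_{i=1}^n f_{i-1,k}f_{n-i,k}$. Each smaller index $m\le n-1$ also satisfies $k\ge m-1$, so the inductive hypothesis gives $f_{m,k}=C_m$, and the Catalan recurrence finishes the step.

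\textbf{Part (2).} I would show that $\sigma_{i-1}(1)=(1,i-1,\dots,2)$. By the Lemma, its fiber is $f_{i-2,1}$. Substituting into the recursion gives
\[
f_{n,1}=f_{n-1,1}+f_{n-2,1}+\sum_{i=3}^n f_{i-2,1}f_{n-i,1}=f_{n-1,1}+\sum_{j=0}^{n-2}f_{j,1}f_{n-2-j,1},
\]
which is the Motzkin recurrence with $f_{0,1}=f_{1,1}=1$.

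\textbf{Part (3).} Here $\sigma_{m}(2)=(m,1,m-1,\dots,2)$, and I would call the size of its fiber $h_m$. I apply Theorem~\ref{theorem: general recursive} with $i^*=1$ and $k=2$, treating the terms $i=1,2,3$ and $i\ge4$ separately.
\begin{itemize}
\item For $i=1$, $\alpha$ is empty and $\beta=(1,\mathrm{dec})$, so by the Lemma the term is $f_{m-2,2}$.
\item For $i=2$, $\alpha=(1)$ and $\beta$ is decreasing of size $m-2$, giving $f_{m-2,2}$.
\item For $i=3$, $\alpha=(1,2)$ and $\beta$ is decreasing of size $m-3$, giving $f_{m-3,2}$.
\item For $i\ge4$, $\alpha=\pat(1;\pi_i;\pi_{3:i-1})=(1,2,\mathrm{dec})$. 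Applying the Lemma twice gives the term $f_{i-3,2}f_{m-i,2}$.
\end{itemize}
Thus $h_m=2f_{m-2,2}+f_{m-3,2}+\sum_{i=4}^m f_{i-3,2}f_{m-i,2}$. Plugging $h_{i-1}$ into Corollary~\ref{corollary: longest recursive} and reindexing produces the displayed recurrence, with the nested convolution coming from the sum over $h$. The identification with A101785 would then be done by comparing generating functions, or at least the initial terms.

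\textbf{Part (4).} Set $k=n-2$. In Corollary~\ref{corollary: longest recursive} only the term $i=n$ survives in the second sum. By Part (5), every $f_{m,k}$ with $m\le n-1$ equals $C_m$. Hence the first sum is $C_n-C_{n-1}$, and the remaining term is $t_{n-1}$, defined as the fiber size of $\tau_m=(m,\dots,3,1,2)$ with $k\ge m-1$.

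I claim $t_m=C_m-C_{m-1}$. To prove it, I apply Theorem~\ref{theorem: general recursive} to $\tau_m$. Since $k\ge m-1$, all indices satisfy $i\le i^*+k$. The terms are as follows.
\begin{itemize}
\item For $i\le m-2$, $\alpha$ is decreasing and $\beta$ has the shape of $\tau_{m-i}$, giving $C_{i-1}t_{m-i}$.
\item For $i=m-1$, the term is $C_{m-2}\cdot1$.
\item For $i=m$, the term is $t_{m-1}$.
\end{itemize}
This yields $t_m=\sum_{i=1}^{m-2}C_{i-1}t_{m-i}+C_{m-2}+t_{m-1}$ with $t_2=1$. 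I would verify $t_m=C_m-C_{m-1}$ by induction using the Catalan convolution.

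\textbf{Main obstacle.} I expect the hard part to be the bookkeeping in Parts (3) and (4). Identifying the $\alpha$ patterns correctly is delicate, and so is checking that the hypothesis $k\ge$ (size $-1$) is inherited by the sub-patterns. The telescoping algebra in the $t_m$ induction is the step I would check most carefully.
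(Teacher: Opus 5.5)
Your proposal is correct and follows essentially the paper's route. You use the same longest-word recursion (Corollary~\ref{corollary: longest recursive}), the same leading-$1$ reduction for $\sigma_{i-1}(1)$, and the same decomposition of $\sigma_m(2)$ through Theorem~\ref{theorem: general recursive}, which yields the paper's $2f_{m-2,2}+\sum_{j=0}^{m-3}f_{j,2}f_{m-3-j,2}$. In Part (4) you get the same recursion for the auxiliary fiber $t_m$ (the paper's $p_m$), and the only substantive difference there is that you verify $t_m=C_m-C_{m-1}$ by direct induction with the Catalan convolution, where the paper uses the generating-function identity $P(x)=C(x)-xC(x)+x-1$.

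Two small points to tighten. For the A101785 identification you must actually carry out the generating-function step: $F=1+xF+x^2F(1+xF)^2$, so $A=1+xF$ satisfies $A=1+xA/(1-x^2A^2)$. Matching initial terms proves nothing. Also, the case $n=2$ of Part (4) has $k=0$, so the recursion does not apply and it must be taken from Part (1).
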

\begin{proof}
We prove each item separately.
\begin{enumerate}
    \item 
    For $k = 0$ and $n \geq 1$ we recover  classical parking functions. 
    In this case, the only way for car $1 \leq i \leq n$ to park in spot $n - i + 1$ is if it prefers this spot. 
    Therefore, there is exactly one parking function with outcome $w_0 \in \mathfrak{S}_n$.
    \item
    Based on Corollary~\ref{corollary: longest recursive}, for $k = 1$ and $n \geq 1$ we have
    \begin{align*}
        f_{n, 1} 
        &= f_{0,1}f_{n - 1, 1} + f_{1,1} f_{n-2,1} + \sum_{i=3}^n \left|\out^{-1}_{\CPF_{i - 1}(1)}(\sigma_{i-1}(1)
        ) \right| f_{n-i,1} \\
        &= f_{n - 1, 1} + f_{n - 2, 1} + \sum_{i=3}^n \left|\out^{-1}_{\CPF_{i - 1}(1)}(
        \sigma_{i-1}(1)
        ) \right| f_{n-i,1}.
    \end{align*}
    Now, note that
    \begin{equation*}
        \sigma_{i-1}(1)
        = (1, \underbrace{i - 1, i - 1 - 1, \cdots, 2}_{i - 2 \text{ decreasing terms }}) \in \mathfrak{S}_{i - 1}.
    \end{equation*}
    Since $k \geq 1$, parking cascades can form.
    This implies that for any $\alpha = (a_1, a_2, \ldots, a_{i - 1}) \in \out^{-1}_{\CPF_{i - 1}(1)}(
    \sigma_{i-1}(1)
    )$, we must have $a_1 = 1$ and $a_j \geq 2$ for all $1 < j \leq i - 1$.
    Therefore,
    \begin{align*}
        \left|\out^{-1}_{\CPF_{i - 1}(1)}(
        \sigma_{i-1}(1)
        ) \right| 
        = 
        1 \cdot f_{i-2, 1}.
    \end{align*}
    Replacing this term in the recursive relation leads to
    \begin{align*}
        f_{n, 1} 
        &= f_{n - 1, 1} + f_{n - 2, 1} + \sum_{i=3}^n  f_{i - 2, 1} f_{n-i,1} 
        = f_{n - 1, 1} + \sum_{i=2}^n  f_{i - 2, 1} f_{n-i,1} \\
        &= f_{n - 1, 1} + \sum_{i=0}^{n-2}  f_{i, 1} f_{n-2-i,1}
    \end{align*}
    for $n \geq 1$ with the initial condition $f_{0,1} = 1$.
    These are the Motzkin numbers \cite[\href{https://oeis.org/A001006}{ OEIS A001006}]{OEIS}.
    \item 
    Based on Corollary~\ref{corollary: longest recursive}, for $k = 2$ and $n \geq 1$ we have
    \begin{align*}
        f_{n, 2} 
        &= f_{0,2}f_{n - 1, 2} + f_{1,2} f_{n-2,2} + f_{2,2} f_{n-3,2} + \sum_{i=4}^n \left|\out^{-1}_{\CPF_{i - 1}(2)}(
        \sigma_{i-1}(2)
        ) \right| f_{n-i,2} \\
        &= f_{n - 1, 2} + f_{n - 2, 2} + 2f_{n - 3, 2} + \sum_{i=4}^n \left|\out^{-1}_{\CPF_{i - 1}(2)}(
       \sigma_{i-1}(2)
        ) \right| f_{n-i,2}.
    \end{align*}
    Now, note that
    \begin{equation*}
        \sigma_{i-1}(2)
        = (i-1, 1, \underbrace{i - 1 - 1, \cdots, 2}_{i - 3 \text{ decreasing terms }}) \in \mathfrak{S}_{i - 1}.
    \end{equation*}
    Since $k \geq 1$, parking cascades can form.
    This implies that for any $\alpha = (a_1, a_2, \ldots, a_{i - 1}) \in \out^{-1}_{\CPF_{i - 1}(2)}(
    \sigma_{i-1}(2)
    )$ with $i \geq 4$, we must have $a_{i-1} = 1$.
    Then, after arguing in the style of the proof of Theorem~\ref{theorem: general recursive}, we obtain
    \begin{align*}
        \left|\out^{-1}_{\CPF_{i - 1}(2)}(
        \sigma_{i-1}(2)
        ) \right| 
        = 
        2 \cdot f_{i-3, 2} + \sum_{j=0}^{i-4} f_{j,2} f_{i-4-j}.
    \end{align*}
    We replace this term in the expression above and simplify it to obtain
    \begin{equation*}
        f_{n, 2} 
        = 
        f_{n - 1, 2} + f_{n - 2, 2} + 2 \sum_{i=0}^{n-3}f_{i,2}f_{n-3-i,2} + \sum_{i=0}^{n-4} \left(\sum_{j=0}^i f_{j,2} f_{i-j,2}\right) f_{n-4-i,2}.
    \end{equation*}

    Letting $F(x) = \sum_{n \geq 0} f_{n,2}x^n$, the recursive relation yields
    \begin{align*}
        F(x)  
        &= 1 + xF(x) + x^2F(x) + 2x^3F(x)^2 + x^4F(x)^3 \\
        &= 1 + xF(x) + x^2F(x)\left(1 + xF(x)\right)^2.
    \end{align*}
    Let $A(x) = 1 + xF(x)$ be a shift of $F(x)$ with an additional initial $1$.
    Then,
    \begin{align*}
        A(x) 
        &= 1 + x\left(1 + xF(x) + x^2F(x)\left(1 + xF(x)\right)^2\right) \\
        &= 1 + x \left(A(x) + x\left(A(x) - 1 \right)A(x)^2\right) \\
        &= 1 + x A(x) + x^2\left(A(x) - 1 \right)A(x)^2.
    \end{align*}
    This may be rearranged to obtain
    \begin{align*}
        A(x) = 1 + \frac{xA(x)}{1- x^2A(x)^2},
    \end{align*}
    which is the generating function of \cite[\href{https://oeis.org/A101785}{A101785}]{OEIS}.
    
    \item
    For $n = 2$ we have $k = n - 2 = 0$, so that $f_{2,0} = 1 = C_2 - C_0$ because of the decreasing outcome pattern.
    Based on Corollary~\ref{corollary: longest recursive}, for $n \geq 3$ and $k = n - 2 \geq 1$ we have
    \begin{align*}
        f_{n, n-2} 
        &= \sum_{i = 1}^{n-1} f_{i-1, n-2} f_{n - i, n-2} + \left|\out^{-1}_{\CPF_{n - 1}(n-2)}(
        \sigma_{n-1}(n-2)
        ) \right| f_{0, n-2} \\
        &= \sum_{i = 1}^{n-1} f_{i-1, n-2} f_{n - i, n-2} + \left|\out^{-1}_{\CPF_{n - 1}(n-2)}(
        \sigma_{n-1}(n-2)
        ) \right|.
    \end{align*}
    In Corollary~\ref{corollary: counts}, part 5 we show that, if $m \geq 1$ and $k \geq m - 1$, then $f_{m, k} = C_m$.
    Now, note that $n - 2 \geq i - 2, n - i - 1$ for all $1 \leq i \leq n - 1$.
    Therefore, letting $p_{n-1} = \left|\out^{-1}_{\CPF_{n - 1}(n-2)}(
    \sigma_{n-1}(n-2)
    ) \right|$, the above expression reduces to
    \begin{align*}
        f_{n, n-2} 
        = \sum_{i = 1}^{n-1} C_{i-1} C_{n-i} + p_{n-1}
        = C_n - C_{n-1} + p_{n-1}.
    \end{align*}
    We now derive a recursive relation for $p_{m}$ for $1 \leq m \leq n - 1$.
    Note that 
    \begin{equation*}
        \sigma_{m}(n-2)
        = (\underbrace{m, m - 1, \cdots, 3}_{m - 2 \text{ decreasing terms }}, 1, 2) \in \mathfrak{S}_{m}
    \end{equation*}
    for $m \geq 3$, whereas 
    $\sigma_{2}(n-2) 
    = (1, 2)$ and 
    $\sigma_{1}(n-2)
    = (1)$. 
    Since $k \geq 1$, parking cascades can form.
    First, this implies that $p_1 = p_2 = 1$.
    Next, consider $m \geq 3$.
    Then, for any $\alpha = (a_1, a_2, \ldots, a_m) \in \out^{-1}_{\CPF_{m}(n-2)}(
    \sigma_{m}(n-2)
    )$, we must have $a_m = 1$.
    Therefore, upon the arrival of car $m$, there is exactly one empty spot $1 \leq j \leq m$.
    Conditioning on its precise index as in the proof of Theorem~\ref{theorem: general recursive} and noting the decreasing outcome pattern to its left, for $m \geq 3$ we obtain 
    \begin{align*}
        p_{m} 
        &= \sum_{j=1}^{m-1} f_{j - 1, n - 2} p_{m - j} + p_{m-1}
        = \sum_{j=1}^{m-1} C_{j - 1} p_{m - j} + p_{m-1},
    \end{align*}
    where the second equality holds again by Corollary~\ref{corollary: counts}, part 5.

    We now derive a generating function for $(p_m)_{m \geq 0}$ with $p_0 = 0$.
    Let $P(x) = \sum_{m \geq 0} p_m x^m$ and $C(x) = \sum_{m \geq 0} C_m x^m$.
    Then, the recursive relation yields
    \begin{equation*}
        P(x) - x - x^2 = x\left(C(x)P(x) - x\right) + x\left(P(x) - x\right).
    \end{equation*}
    Upon rearranging we obtain 
    \begin{equation*}
        P(x) = \frac{x^2 - x}{xC(x) + x - 1}.
    \end{equation*}
    Next, we use $C(x)$ and $P(x)$ to derive a generating function for $(f_{n, n-2})_{n \geq 0}$ with $f_{0,-2} = f_{1,-1} = 0$.
    Letting $F(x) = \sum_{n \geq 0} f_{n,n-2}x^n$, the recursive relation yields
    \begin{equation*}
        F(x) - x^2 = C(x) - 1 -x - 2x^2 - x(C(x)-1-x) + x(P(x) - x).
    \end{equation*}
    Upon rearranging we obtain
    \begin{equation*}
        F(x) = C(x) -xC(x) + xP(x) -x^2 - 1.
    \end{equation*}
    
    Now, consider the sequence $d_n = C_{n+1} - C_{n-1}$ for $n \geq 1$ with $d_0 = 0$; this is \cite[\href{https://oeis.org/A280891}{A280891}]{OEIS}.
    Letting $D(x) = \sum_{n \geq 0} d_n x^n$, the recursive relation yields 
    \begin{equation*}
        xD(x) = C(x) - x^2C(x) - x - 1.
    \end{equation*}
    Upon rearranging we obtain
    \begin{equation*}
        D(x) = \frac{C(x) - x^2C(x) - x - 1}{x}.
    \end{equation*}
    To shift the sequence one term to the right, with $0$ as the coefficient of both $x^0$ and $x^1$, we set
    \begin{equation*}
        B(x) = xD(x) = C(x) - x^2C(x) - x - 1.
    \end{equation*}
    Finally, we claim that $F(x) = B(x)$.
    After canceling their terms in common, it suffices to show that
    \begin{equation*}
        \underbrace{\frac{x^2 - x}{xC(x) + x - 1}}_{P(x)} = C(x) - xC(x) + x - 1.
    \end{equation*}
    But, after simplifying through the identity $C(x) = 1 + xC(x)^2$ we find
    \begin{equation*}
        \frac{x^2 - x}{xC(x) + x - 1}
        = \frac{C(x)^2\left(x^2 - x\right)}{C(x)^2\left(xC(x) + x - 1\right)}
        = C(x) - xC(x) + x - 1.
    \end{equation*}
    
    \item
    We prove this by strong induction on $n \geq 1$.
    
    For $n = 1$, we have $f_{1, k} = 1 = C_1$ for all $k \geq 0$.
    Now, let $n \geq 2$.
    By way of strong induction, suppose $f_{i, k} = C_i$ for all $1 \leq i < n$ and $k \geq i - 1$.
    Based on Corollary~\ref{corollary: longest recursive}, for $k \geq n - 1 \geq 1$ we have
    \begin{align*}
        f_{n, k} 
        &= \sum_{i = 1}^{n} f_{i-1, k} f_{n - i, k} 
        = \sum_{i = 1}^{n} C_{i - 1} C_{n - i},
    \end{align*}
    where the second equality holds by the inductive hypothesis together with the fact that $k \geq i - 2, n - i - 1$ for all $1 \leq i \leq n$.
    Therefore, $f_{n, k} = C_n$.
    These are the Catalan numbers \cite[\href{https://oeis.org/A000108}{OEIS A000108}]{OEIS}.
\end{enumerate}
\end{proof}

Next, we leverage the sequences in Corollary~\ref{corollary: counts} to obtain a more general result about \emph{layered permutations}.

\begin{definition}
    The \emph{direct sum} of two permutations $\sigma \in \mathfrak{S}_k$ and $\tau \in \mathfrak{S}_\ell$ is a permutation of length $k+\ell$ defined as follows:
    \[
    (\sigma \oplus \tau) (i) = \begin{cases}
        \sigma (i) & \mbox{ if } i\leq k \\
        k+ \tau(i-k) & \mbox{ if } k+1 \leq i \leq k+ \ell
    \end{cases} 
    \]

    A permutation $\pi \in \mathfrak{S}_n$ is \emph{layered} if it is the direct sum of decreasing permutations.
\end{definition}

For example, the eight layered permutations of length $n = 4$ are
\begin{equation*}
    \underline{1} \ \underline{2}\ \underline{3}\ \underline{4} \quad
    \underline{1} \ \underline{2} \ \underline{4} \ 3 \quad
    \underline{1} \ \underline{3} \ 2\ \underline{4} \quad 
    \underline{1} \ \underline{4} \ 3\ 2 \quad 
    \underline{2} \ 1 \ \underline{3} \ \underline{4} \quad 
    \underline{2} \ 1 \ \underline{4} \ 3 \quad 
    \underline{3} \ 2 \ 1 \ \underline{4} \quad 
    \underline{4} \ 3 \ 2 \ 1.
\end{equation*}
\begin{center}
    \begin{tikzpicture}[scale = .2875]
        \lgperm{{1,2,3,4}}{4}
        \begin{scope}[shift = {(5,0)}]
        \lgperm{{1,2,4,3}}{4}
        \end{scope}
        \begin{scope}[shift = {(10,0)}]
        \lgperm{{1,3,2,4}}{4}
        \end{scope}
        \begin{scope}[shift = {(15,0)}]
        \lgperm{{1,4,3,2}}{4}
        \end{scope}
        \begin{scope}[shift = {(20,0)}]
        \lgperm{{2,1,3,4}}{4}
        \end{scope}
        \begin{scope}[shift = {(25,0)}]
        \lgperm{{2,1,4,3}}{4}
        \end{scope}
        \begin{scope}[shift = {(30,0)}]
        \lgperm{{3,2,1,4}}{4}
        \end{scope}
        \begin{scope}[shift = {(35,0)}]
        \lgperm{{4,3,2,1}}{4}
        \end{scope}

    \end{tikzpicture}
\end{center}
Note that layered permutation are uniquely determined by the first element of each layer (underlined in the previous example), and that between each consecutive element there is an ascent. Below each layered permutation is the representation of the permutation $\pi$ in cartesian coordinates, so that there is an $\times$ in the grid square corresponding to $(i,\pi(i))$.

As we show next, their structure as the direct sum of decreasing patterns facilitates a recursive application of Corollary~\ref{corollary: counts} for $k \geq 1$.

\begin{corollary}
\label{corollary: layered}
For any $n, k \geq 1$, let $ \displaystyle \ell_{n,k} = \sum_{\pi \in \mathfrak{L}_n} |\out^{-1}_{\CPF_n(k)}(\pi)|$ where $\mathfrak{L}_n \subseteq \mathfrak{S}_n$ is the set of layered permutations of length $n$.
Moreover, let $f_{n,k} =|\out^{-1}_{\CPF_n(k)}(w_0)|$ denote the size of the fiber of the longest word $w_0 \in \mathfrak{S}_n$ under the $k$-cascading parking rule.
Then:
\begin{enumerate}
    \item 
    For any $n \geq 2$ we have
    \begin{equation*}
        \ell_{n,k} = \sum_{i=1}^n f_{i,k} \ell_{n-i,k},
    \end{equation*}
    where $\ell_{0,k} = 1$ and $\ell_{1,k} = 1$.
    Therefore, $\{\ell_{n,k}\}_{n \geq 0}$ is the row sums of the convolution triangle of $\{f_{n,k}\}_{n \geq 1}$.
    \item 
    If $L_{k}(x) = \sum_{n \geq 0} \ell_{n,k} x^n$ and $F_{k}(x) = \sum_{n \geq 0} f_{n,k} x^n$, then 
    \begin{equation*}
        L_k(x) = \frac{1}{2-F_k(x)}.
    \end{equation*}
    In particular:
    \begin{itemize}
        \item 
        $L_1(x) = \frac{1}{2-M(x)}$, where $M(x)$ is the generating function of the Motzkin numbers.
        This is \cite[\href{https://oeis.org/A358092}{OEIS A358092}]{OEIS}, which is the row sums of \cite[\href{https://oeis.org/A202710}{OEIS A202710}]{OEIS} and whose first few terms for $n \geq 0$ are
        \begin{equation*}
            1,1, 3, 9, 28, 88, 279, 889, 2843, 9115, 29279, \ldots.
        \end{equation*}
        \item 
        $L_{n-1}(x) = \frac{1}{2-C(x)}$, where $C(x)$ is the generating function of the Catalan numbers.
        This is \cite[\href{https://oeis.org/A088218}{OEIS A088218}]{OEIS}, which is the row sums of \cite[\href{https://oeis.org/A039598}{OEIS A039598}]{OEIS} and whose first few terms for $n \geq 0$ are
        \begin{equation*}
            1,1, 3, 10, 35, 126, 462, 1716, 6435, 24310, 92378, \ldots.
        \end{equation*}
    \end{itemize}
\end{enumerate}
\end{corollary}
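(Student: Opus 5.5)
The plan is to prove a factorization lemma: the fiber of a direct sum factors into the fibers of its summands. Concretely, for $k \ge 1$, $\sigma \in \mathfrak{S}_a$ and $\tau \in \mathfrak{S}_b$,
\[
|\out^{-1}_{\CPF_{a+b}(k)}(\sigma\oplus\tau)| = |\out^{-1}_{\CPF_a(k)}(\sigma)|\cdot|\out^{-1}_{\CPF_b(k)}(\tau)|.
\]
Every layered $\pi\in\mathfrak{L}_n$ with $n\ge 1$ is uniquely $w_0^{(i)}\oplus\pi'$, where $w_0^{(i)}$ is the decreasing permutation of length $i\in[n]$ and $\pi'\in\mathfrak{L}_{n-i}$. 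Summing over this decomposition then gives part (1) directly:
\[
\ell_{n,k}=\sum_{i=1}^n f_{i,k}\,\ell_{n-i,k}.
\]
Here we use $\ell_{0,k}=1$ for the empty permutation, and $\ell_{1,k}=f_{1,k}=1$ is consistent with this.

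To prove the factorization, I would argue by induction on $a+b$, using Theorem~\ref{theorem: general recursive}. Let $\pi=\sigma\oplus\tau$. The largest value $n=a+b$ lies in the $\tau$-block, so $i^*=a+\tau^{-1}(b)>a$. Every index $i\ge i^*$ in the sum also lies beyond the block. Consequently, in $\alpha(\pi,k,i^*,i)$ the prefix $\pi_{1:a}$ stays in front and uses the $a$ smallest values. The rearrangement of entries happens only among positions after $a$, so
\[
\alpha(\pi,k,i^*,i)=\sigma\oplus\alpha(\tau,k,\tau^{-1}(b),i-a).
\]
Also $\beta(\pi,k,i^*,i)=\beta(\tau,k,\tau^{-1}(b),i-a)$. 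Applying the induction hypothesis to the left factor, each summand for $\pi$ equals $|\out^{-1}(\sigma)|$ times the corresponding summand for $\tau$. Summing over $i$ and applying Theorem~\ref{theorem: general recursive} to $\tau$ yields the claim. The base case $b=0$ is trivial.

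One could instead argue directly: cars parking in spots $a+1,\dots,a+b$ never interact with spots $1,\dots,a$. The difficulty is that the cars are interleaved in time, so the direct argument needs a shuffle count that actually contributes nothing, because car labels are fixed by $\pi$. The recursive route through Theorem~\ref{theorem: general recursive} avoids this subtlety, so I prefer it.

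\textbf{Main obstacle.} The step I expect to need the most care is checking the identity for $\alpha$ when $i>i^*+k$. There the entry $\pi_i$ is moved to position $i^*+k$, and I must confirm that the pattern map commutes with the $\oplus\,\sigma$ prefix. This holds because all moved entries exceed every entry of $\sigma$.

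For part (2), multiply the recurrence by $x^n$ and sum over $n\ge1$. This gives $L_k(x)-1=(F_k(x)-1)L_k(x)$, using $f_{0,k}=1$, and hence $L_k=1/(2-F_k)$. The special cases then follow by substituting Corollary~\ref{corollary: counts}:
\begin{itemize}
\item For $k=1$, use $F_1=M(x)$ from part 2 of that corollary.
\item For $k\ge n-1$, use part 5. Only the coefficients $f_{i,k}$ with $i\le n$ enter $\ell_{n,k}$, and each of these equals $C_i$ since $k\ge n-1\ge i-1$. So $F_k$ may be replaced by $C(x)$ up to degree $n$.
\end{itemize}
Finally, I would verify the listed initial terms against the OEIS entries.
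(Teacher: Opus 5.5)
Your proof is correct. Its skeleton matches the paper's: peel off the first layer, multiply fibers, then sum a geometric series. The difference is in how you justify that fibers multiply.

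\begin{itemize}
\item \textbf{The paper's route.} It simply asserts that, because $k\ge 1$, a tuple with layered outcome is the same as a sequence of nonempty tuples, each parking in decreasing order. It then reads off $L_k=1/(1-(F_k-1))$ from the sequence construction of the symbolic method.
\item \textbf{Your route.} You isolate the general lemma $|\out^{-1}_{\CPF_{a+b}(k)}(\sigma\oplus\tau)|=|\out^{-1}_{\CPF_a(k)}(\sigma)|\cdot|\out^{-1}_{\CPF_b(k)}(\tau)|$ and prove it by induction through Theorem~\ref{theorem: general recursive}. The needed identities are $\alpha(\sigma\oplus\tau,k,a+j^*,a+j)=\sigma\oplus\alpha(\tau,k,j^*,j)$ and $\beta(\sigma\oplus\tau,k,a+j^*,a+j)=\beta(\tau,k,j^*,j)$, with $j^*=\tau^{-1}(b)$. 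They hold in both cases of the definition of $\alpha$, because every rearranged entry exceeds $a$.
\end{itemize}

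Your approach gives a rigorous, reusable statement for arbitrary direct sums, at the cost of some index bookkeeping. The paper's one-line argument leaves the decomposition to the reader.

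Your stated reason for avoiding the direct argument is mistaken, though. In $\sigma\oplus\tau$, the cars $1,\dots,a$ that end in spots $1,\dots,a$ all arrive before cars $a+1,\dots,a+b$, so there is no interleaving in time. The direct argument is short:
\begin{itemize}
\item Cars only move rightward, so after car $a$ arrives, spots $1,\dots,a$ are filled exactly by cars $1,\dots,a$.
\item With $k\ge1$, any later car preferring a spot $\le a$ would park there. That would push one of cars $1,\dots,a$ out of $[1,a]$ permanently, contradicting the outcome.
\item Hence every later car prefers a spot $>a$, and the later cars run a shifted copy of a $\tau$-process.
\end{itemize}
This is essentially what the paper has in mind.

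Finally, your coefficientwise reading of the loosely stated item $L_{n-1}(x)=1/(2-C(x))$ is the right way to make sense of it. It uses only $f_{i,k}=C_i$ for $i\le n\le k+1$.
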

\begin{proof}
Since a layered permutation is a direct sum of non-empty decreasing permutations, and since $k \geq 1$, $\ell_{n,k}$ counts the number of non-empty $k$-cascading parking functions that each park in the order of $w_0$ (of appropriate length) and in which the total number of cars is $n$. 
This immediately yields the recursive relation.

Since the generating function of non-empty $k$-cascading parking functions that park in the order of $w_0$ is $F_k(x)$, the associated generating function for the non-empty tuples whose outcome is $w_0$ is $F_k(x)-1$.
Since $L_k(x)$ is counting sequences of non-empty $k$-cascading parking functions whose individual outcomes are the respectively sized $w_0$ and who collectively have total length $n$, then $L_k(x) = \frac{1}{1-(F_k(x)-1)}=\frac{1}{2-F_k(x)}.$ This follows from standard techniques concerning the symbolic method, which can be found in~\cite{flajolet2009analytic}.
\end{proof}
The generating function $L_1(x)$ in Corollary~\ref{corollary: layered} simplifies the existing entry~\cite[\href{https://oeis.org/A358092}{OEIS A358092}]{OEIS}.

Another collection of outcome permutations $\pi$ and choice of $k$ with a particularly clean enumeration are those where $\pi^{-1}$  has a unique  descent (i.e., $\pi^{-1}$ is Grassmanian) and where $k\ge n-1$.
To state our final result, we need two additional combinatorial objects: skew Young diagrams and weak $P$-partitions.  
Given two integer partitions $\lambda, \mu$ of $n$ with $\mu\prec \lambda$ (i.e., $\mu_i\leq \lambda_i$ for all $i$), the skew Young diagram $\lambda/\mu$ is the diagram obtained by deleting the boxes of the Young diagram of $\mu$ from that of $\lambda$.
Recall that given a skew Young diagram $\lambda/\mu$, a weak $P$-partition of $\lambda/\mu$ is a 
filling of the boxes in the diagram ensuring the  entries in the boxes are in weakly increasing order along both the columns and rows. 
For an example see the diagram in \Cref{fig:skew_tab_ex} and for more on these subjects see \cite{Fulton}. 
Our enumerative statement is the following.

\begin{figure}[h]
    \centering\resizebox{1in}{!}{
    \begin{tikzpicture}
    \draw[step=1cm,color=gray] (0,0) grid (3,2);
    \draw[step=1cm,color=gray] (1,2) grid (3,3);
    \draw[step=1cm,color=gray] (2,3) grid (3,4);
    \draw[red, ultra thick](0,0)--(0,1)--(2,1)--(2,3)--(3,3)--(3,4);
    \node at(.5,.5){1};
    \node at(1.5,.5){1};
    \node at(2.5,.5){1};
    \node at(2.5,1.5){1};
     \node at(2.5,2.5){1};
    \node at(.5,1.5){0};
    \node at(1.5,1.5){0};
    \node at(1.5,2.5){0};
    \node at(2.5,3.5){0};
    \end{tikzpicture}
    }
    \caption{The North East Lattice Path (highlighted in red) in the skew shape $(3^4)/(2,1)$ corresponding to the parking function $\alpha=(1,4,1,5,2,7,3)$ with $\out(\alpha)^{-1} = 3571246$.}
    \label{fig:skew_tab_ex}
\end{figure}

\begin{proposition}\label{prop:infinite_cascade_gras}
Let $n \geq 1$ and $k \geq n - 1$.
Let $\pi \in \mathfrak{S}_n$ be such that 
\begin{equation*}
    \pi^{-1} =  \underbrace{s_1 s_2 \dots s_{n-m-1}}_{s_{i} < s_{i + 1}} \underbrace{s_{n-m}}_{s_{n-m} = n} \underbrace{t_{1}}_{t_{1} = 1} \underbrace{t_{2} t_{3} \dots t_m}_{t_i < t_{i + 1}}
\end{equation*}
for some $1 \leq m < n$ as a word in one-line notation.
Then, $\out^{-1}_{\CPF_n(k)}(\pi)$ is in bijection with the set of weak $P$-partitions of the skew Young diagram $(n-m)^m/(t_{m+1-i}-(m+1-i))_{1\le i \le m}$ with entries in $\{0,1\}$.
\end{proposition}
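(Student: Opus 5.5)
The plan is to describe the fiber explicitly and then read the answer off as lattice paths in a rectangle. First I will record what the rule does when $k \geq n-1$. A cascade can never stop early, so a car that prefers an occupied spot $p$ takes spot $p$, and the whole maximal contiguous block of occupied spots starting at $p$ shifts one step to the right. Consequently, once a car is parked, it moves only when a later car prefers a spot at or to the left of it inside the same block. In particular, the relative left-to-right order of the cars already parked never changes.

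Next I will pin down the preferences of the last $m$ cars, i.e. cars $n-m+1,\dots,n$, which end in spots $t_1=1<t_2<\cdots<t_m$. A car that arrives after car $n-m+1$ parks exactly at its preferred spot and never moves afterwards. Were it to prefer $1$, it would end in spot $1$ and push car $n-m+1$ out of spot $1$. Hence car $n-m+1$ can only end in spot $1$ if $a_{n-m+1}=1$.

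Similarly, consider car $n-m+j$ with $j\geq 2$. Every car arriving after it ends strictly to its right. Any such later car therefore must prefer a spot strictly to the right of car $n-m+j$'s current position, since otherwise it would land to its left. So once car $n-m+j$ parks it is never displaced, and it parks at its preference. This forces $a_{n-m+j}=t_j$.

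Thus all the freedom lies in the first $n-m$ cars. By the same order-preservation argument, their final order must already hold after they have parked. That order is cars $1,\dots,n-m-1$ in increasing spots, followed by car $n-m$ at the far right. So each of these cars must prefer an empty spot strictly to the right of all cars already parked, and no bumping occurs in this phase. The first phase is therefore encoded by a strictly increasing sequence $q_1<\cdots<q_{n-m}$ in $[n]$, with $a_j=q_j$.

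The heart of the proof is to determine which sequences $(q_j)$ are compatible with the fixed insertions $1,t_2,\dots,t_m$. Every spot must end up filled, and the first-group cars must end up in the complement of $\{t_i\}$, in order. Car $n-m$ must reach spot $n$, which is automatic once the final spots are the complement.

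The tool I would use to organize this is the interleaving of the two groups by final position. When the $j$-th $t$-car is inserted at spot $t_j$, it shifts right exactly those first-group cars lying in the contiguous block starting at $t_j$. I expect this condition to translate into an inequality of the form
\[
\#\{\,r : q_r < t_j\,\} \;\ge\; \text{(a bound determined by } t_j - j\text{)},
\]
relating, for each $j$, how many first-group cars start to the left of $t_j$. Note that $t_j-j$ is exactly the number of first-group cars that end left of $t_j$.

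Encoding the choice of the $q$'s as a north-east lattice path in an $m\times(n-m)$ rectangle, with one north step per $t$-car and one east step per first-group car, these inequalities should say precisely that the path stays weakly on one side of the staircase with row lengths $t_{m+1-i}-(m+1-i)$. Lattice paths in $(n-m)^m/\mu$ correspond bijectively to $\{0,1\}$ weak $P$-partitions: put $0$ on one side of the path and $1$ on the other, as in Figure~\ref{fig:skew_tab_ex}.

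I expect the main obstacle to be this middle step. The difficulty is proving both directions of the characterization: every admissible path gives a preference tuple with outcome $\pi$, and conversely. The subtlety is that a single insertion can shift a long block, so whether $q_r$ is allowed depends on the gaps left by the earlier $q$'s, not merely on a count.

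My first attempt will be induction on $m$. I will remove the last car $n$, which parks at $t_m$ and shifts the rightmost block, and show that doing so reduces the skew shape by deleting its last row. I will check the base case $m=1$, where the shape is a single row and every increasing $q$ ending at or before $n$ works, against Theorem~\ref{theorem: general recursive}.
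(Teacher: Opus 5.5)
\textbf{There is a genuine gap: the central step is only conjectured, never proved.}

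Your preliminary reductions are sound under the literal reading of the statement, where car $r\le n-m$ ends in spot $s_r$ and car $n-m+j$ ends in spot $t_j$. With $k\ge n-1$ parked cars keep their relative order. The last $m$ cars are forced to have $a_{n-m+j}=t_j$. The first $n-m$ cars park without bumping at some $q_1<\dots<q_{n-m}$.

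The problem is the step you yourself call the heart of the proof. You only predict an inequality ``of the form'' $\#\{r:q_r<t_j\}\ge(\cdots)$ and say it ``should'' match the skew shape, without proving either direction. The fallback induction on $m$ is only announced, and as described it is not accurate: removing car $n$ does not just delete a row of the shape. By Theorem~\ref{theorem: general recursive} you must sum over the empty spot $e\in[t_m,n]$ present just before car $n$ arrives. This corresponds to summing over the column of the last north step, with the rectangle narrowing to width $e-m$. Moreover, the left piece can end in a second-group car, so it is not of the same type. As written, the proposal does not establish the bijection.

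\textbf{The missing idea.} No analysis of block shifts is needed; admissibility really is a counting condition, so your worry about ``gaps left by earlier $q$'s'' is unfounded.
In your setting, once every car parks, the outcome is automatically $\pi$:
\begin{itemize}
\item car $n-m+j$ never moves after parking at $t_j$, since all later insertions happen at $t_{j'}>t_j$;
\item the first-group cars keep their relative order, so they fill the complement $\{s_1<\dots<s_{n-m}\}$ in increasing order.
\end{itemize}
Hence the fiber is exactly the set of increasing $q$ for which $(q_1,\dots,q_{n-m},t_1,\dots,t_m)$ is a parking function. By Theorem~\ref{theorem: equivalence} and the classical criterion, this means $\#\{r:q_r\le x\}+\#\{j:t_j\le x\}\ge x$ for all $x\in[n]$. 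That is, $q_r\le s_r$ for every $r$; equivalently $\#\{r:q_r<t_j\}\ge t_j-j$ for every $j$, which confirms your guess. Necessity is just ``cars only move right''; sufficiency is the parking-function criterion.

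\textbf{The translation to the skew shape.} Put $u_r=q_r-r$. You get weakly increasing sequences with $0\le u_r\le s_r-r$, i.e.\ lattice paths in the $m\times(n-m)$ rectangle whose $r$th east step is preceded by at most $s_r-r$ north steps. You still need to observe that this is the same set as the paths whose $j$th north step is preceded by at least $t_j-j$ east steps. Both conditions say the path lies weakly below the path obtained by reading $1,\dots,n$ and stepping east at each $s_r$ and north at each $t_j$. The latter description is exactly the lattice paths in $(n-m)^m/\mu$ with $\mu_{m+1-j}=t_j-j$.

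You need this conjugate description because your reading differs from the paper's. The paper's lemma treats the word $s_1\cdots s_{n-m}t_1\cdots t_m$ as the list of cars in spot order (compare \Cref{fig:skew_tab_ex}). It fixes $\alpha(s_i)=i$ and lets the other cars' preferences range over $[\max\{t_i,\alpha(t_{i-1})+1\},n-m+i]$, which gives the north-step positions directly.
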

As an immediate consequence we have the following enumeration.
\begin{corollary}\label{cor:enum_infinite_cascade}
    Let $n \geq 1$ and $k \geq n - 1$.
Let $\pi \in \mathfrak{S}_n$ be such that 
\begin{equation*}
    \pi^{-1} =  \underbrace{s_1 s_2 \dots s_{n-m-1}}_{s_{i} < s_{i + 1}} \underbrace{s_{n-m}}_{s_{n-m} = n} \underbrace{t_{1}}_{t_{1} = 1} \underbrace{t_{2} t_{3} \dots t_m}_{t_i < t_{i + 1}}
\end{equation*}
for some $1 \leq m < n$ as a word in one-line notation. Then, \[|\out^{-1}_{\CPF_n(k)}(\pi)| = \det\bigg[ \binom{n-t_{m+1-j}+j-1}{n-t_{m+1-j}-i+2j-1}\bigg]_{i,j =1}^{m}.\]
\end{corollary}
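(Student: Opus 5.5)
The plan is to combine Proposition~\ref{prop:infinite_cascade_gras} with a standard determinantal count of monotone sequences bounded below. By the proposition, $|\out^{-1}_{\CPF_n(k)}(\pi)|$ equals the number of weak $P$-partitions of the skew shape $\lambda/\mu$ with entries in $\{0,1\}$, where
\begin{equation*}
\lambda=(n-m)^m, \qquad \mu_i = t_{m+1-i}-(m+1-i) \quad (1\le i\le m).
\end{equation*}
The first step is to confirm that $\mu$ is a genuine partition with $\mu_1\le n-m$. This follows because the $t_i$ are strictly increasing and lie in $[n-1]$, so the values $t_i-i$ are weakly increasing in $i$ and bounded by $n-m-1$.

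The second step recasts a $\{0,1\}$-filling as a sequence. Since entries weakly increase along rows and columns, each row of $\lambda/\mu$ consists of a block of $0$s followed by a block of $1$s. I record, for row $i$, the number $x_i$ of boxes (counting from the left edge of the $n-m$ columns, including the removed boxes of $\mu$) up to and including the last $0$. Thus $\mu_i\le x_i\le n-m$. Column monotonicity translates into weak monotonicity of $(x_i)$ in the row index, with the direction fixed by the English/French convention of Figure~\ref{fig:skew_tab_ex}. Equivalently, the $0/1$ boundary is the North--East lattice path drawn in red in the figure, confined to the skew region. So the count becomes the number of weakly monotone integer sequences $(x_1,\dots,x_m)$ with $x_i\in[\mu_i,n-m]$.

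The third step, which is the main one, evaluates this count as a determinant. I would do it via Lindström--Gessel--Viennot, since lattice paths confined below a staircase boundary are classically counted by Kreweras/Mohanty-type binomial determinants. The route I would try first is to shift $y_i=x_i+i$, which converts the weak inequalities into strict ones. The resulting configuration can then be encoded as $m$ non-intersecting unit-step lattice paths, with starting points determined by $\mu_i+i=t_{m+1-i}+\text{const}$ and with a common right bound. The LGV lemma then expresses the count as $\det[h_{ij}]$, where each $h_{ij}$ is a single binomial coefficient counting paths between the relevant endpoints.

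As a fallback, I would prove the determinant identity by induction on $m$. One fixes the last coordinate and expands the determinant along its last column, then checks that the summation recurrence for the bounded sequences matches the cofactor expansion using Pascal's rule. Finally, I substitute $\mu$ in terms of the $t$'s and reindex $j\mapsto m+1-j$ to reach the stated entries $\binom{n-t_{m+1-j}+j-1}{n-t_{m+1-j}-i+2j-1}$.

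The main obstacle I expect is this last bookkeeping: choosing the LGV endpoints so that the binomials come out exactly in the stated form rather than an equivalent transposed or conjugated one. I would check the indexing on small cases against direct enumeration, for instance $m=1$, where the determinant should reduce to $n-t_1+1$, and the example of Figure~\ref{fig:skew_tab_ex}.
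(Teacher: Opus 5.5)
\paragraph{Overall.} Your first two steps are correct and follow the paper's route. Proposition~\ref{prop:infinite_cascade_gras} reduces the fiber to weakly monotone sequences $x_i\in[\mu_i,n-m]$, and the paper then simply cites Kreweras's determinant, which your LGV argument would re-derive.

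\paragraph{The gap.} It is the final step, where you reindex to ``reach the stated entries''. That step cannot succeed, because the displayed determinant is not the count.

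Set $N_j=n-t_{m+1-j}+j-1$. Since $t_{m+1-j}\le n-1$, we have $N_j\ge j\ge 1$. The $(i,j)$ entry of the stated matrix is then $\binom{N_j}{N_j-(i-j)}$, which is $0$ for $i<j$ and $1$ for $i=j$. So the matrix is lower unitriangular, and the stated formula equals $1$ for every $\pi$.

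Your own proposed sanity check detects this.
\begin{itemize}
\item For $m=1$ we necessarily have $t_1=1$, and the true count is $n-t_1+1=n$, since car $1$ may prefer any spot. For $n=2$ this is $\pi=w_0$ with $f_{2,k}=C_2=2$. The formula instead gives $\binom{n-1}{n-1}=1$.
\item For the example of Figure~\ref{fig:skew_tab_ex} ($n=7$, $t=(1,2,4,6)$), the fiber has $28$ elements, not $1$.
\end{itemize}

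\paragraph{What your method actually proves.} It establishes the correctly indexed Kreweras determinant, for instance
\[
|\out^{-1}_{\CPF_n(k)}(\pi)|=\det\left[\binom{n-m-t_j+j+1}{j-i+1}\right]_{i,j=1}^{m}=\det\left[\binom{n-m-\mu_j+1}{i-j+1}\right]_{i,j=1}^{m},
\]
with $\mu_j=t_{m+1-j}-(m+1-j)$ and $\binom{a}{b}=0$ for $b<0$.
\begin{itemize}
\item The first form is the inclusion--exclusion (LGV) count of weakly increasing $x_1\le\cdots\le x_m$ with $t_i-i\le x_i\le n-m$. This is your sequence read in reverse order.
\item The second form follows by reversing both rows and columns.
\end{itemize}
This formula gives $n$ for $m=1$ and $28$ in the example.

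\paragraph{Consequence.} The statement has to be corrected before it can be proved. The paper's one-line appeal to Kreweras has the same mismatch, since Kreweras's formula does not specialize to the displayed unitriangular matrix.
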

\begin{proof}[Proof of Corollary~\ref{cor:enum_infinite_cascade}]
    This is an immediate application of the determinental formula of Kreweras~\cite{kreweras1965classe} for the number of bounded plane partitions of skew shape.
\end{proof}
To prove Proposition~\ref{prop:infinite_cascade_gras}, we first characterize which preference sequences can produce $\pi$.
\begin{lemma}\label{lem:needed below}
     Let $\pi \in \mathfrak{S}_n$ with $k\ge n-1 $ such that $\pi^{-1}= s_1 s_2 \dots s_{n-m} t_1 t_2 \dots t_m$ where $s_{n-m}=n, t_1 =1$ and $s_i < s_{i+1}$ and $t_i < t_{i+1}$ for all $i$. Then a parking function $\alpha$ has  $\out(\alpha) = \pi$ if and only if $\alpha(s_i)=i$ for each $i\in [n-m]$ and $\alpha(t_i) \in [\max\{t_i, \alpha(t_{i-1})+1\},n-m+i]$ where $\alpha(0)=0$.
\end{lemma}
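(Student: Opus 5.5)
The plan is to model the parking process for $k \ge n-1$ as an insertion process, and then read off both directions of the lemma from that model. Since $k \ge n-1$, no cascade is ever cut short. So when a car arrives preferring spot $p$, it takes spot $p$, and the maximal contiguous run of occupied spots starting at $p$ shifts one place to the right as a block. In particular the run keeps its relative order, and it expands into the first empty spot after it. By the argument of Theorem~\ref{theorem: equivalence}, the set of occupied spots after each arrival is the same as in classical parking. Only the identity of the car in each spot changes.

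I will record three invariants, each proved by induction on the number of arrivals.
\begin{enumerate}
\item A parked car only ever moves to the right.
\item Two parked cars in the same contiguous run never change their relative order.
\item A newly arrived car always ends up to the left of every car it displaces.
\end{enumerate}

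For necessity, I will use the Grassmannian shape of $\pi^{-1}$ in two parts. The first $n-m$ cars occupy increasing spots, and the last of them ends at spot $n$. Because of this, none of them can have been pushed past a later-indexed car among them. Since every spot up to $n-m$ is eventually filled, the first $n-m$ cars must initially fill spots $1,\dots,n-m$ in order, which forces $\alpha(s_i)=i$. Next I treat the last $m$ cars, which end at increasing positions $t_1=1<t_2<\dots<t_m$.
\begin{itemize}
\item Invariant (1) gives $\alpha(t_i)\le t_i$ in the sense of spots. In the paper's indexing convention this becomes the lower bound $\alpha(t_i)\ge t_i$, and I will check the convention carefully against the example in Figure~\ref{fig:skew_tab_ex} before committing to either reading.
\item Each such car is inserted to the left of the run it hits, and later cars must never land to the right of an earlier one. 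This forces the strict increase $\alpha(t_i) > \alpha(t_{i-1})$.
\item At the moment the $i$-th of these cars arrives, only $n-m+i-1$ spots are occupied, and they form an initial segment. Its preference can therefore be at most $n-m+i$, giving the upper bound.
\end{itemize}

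For sufficiency, I will simulate the process on any $\alpha$ satisfying the stated constraints. At every stage the occupied spots form an initial segment, so each arrival inserts into a single run. The strict increase of the preferences, together with the lower bounds, then lets me verify inductively that each inserted car is eventually shifted exactly into its target spot. Meanwhile the first $n-m$ cars are pushed to the right in order and end at the $s$-positions.

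The main obstacle is the bookkeeping of how far each car is shifted. A car inserted at position $p$ is shifted once by each later insertion at a position $\le$ its current spot. I will express its final spot as $p$ plus the number of later preferences at or to its left, and match this count against $t_i$ and $s_i$. I expect that a clean choice here, tracking the final position as a function of preferences rather than simulating step by step, makes both directions of the lemma fall out together.
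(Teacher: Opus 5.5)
Your overall route (an insertion model in which cars only move right and relative order is preserved) follows the paper's outline. The execution, however, has genuine gaps.

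\textbf{The convention.} The lemma is true only if $\pi^{-1}$ lists the cars in spot order: spot $i$ holds car $s_i$ for $i\le n-m$, and spot $n-m+i$ holds car $t_i$. Figure~\ref{fig:skew_tab_ex} confirms this reading, since $\alpha=(1,4,1,5,2,7,3)$ leaves cars $3,5,7,1,2,4,6$ in spots $1,\dots,7$. Your necessity argument instead has the first $n-m$ cars ending at increasing spots, with the last of them at spot $n$. Under that reading the lemma is false. For $n=3$ and $\pi^{-1}=231$, car $3$ is last and would end in spot $1$, which forces $\alpha(3)=1$, whereas the lemma demands $\alpha(s_2)=\alpha(3)=2$. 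Your step ``the first $n-m$ cars fill spots $1,\dots,n-m$ in order'' would also give $\alpha(i)=i$, not $\alpha(s_i)=i$. With the correct reading the argument is short. Car $s_i$ ends in spot $i$, so by invariant (1) it prefers some $p\le i$. A preference $p<i$ is impossible, because spot $p$ is then held by $s_p$, which arrived earlier and never moves.

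\textbf{The lower bound.} Invariant (1) yields only the upper bound $\alpha(t_i)\le n-m+i$; no change of convention turns it into $\alpha(t_i)\ge t_i$. That bound needs a counting step you never make. When $t_i$ arrives, $s_1,\dots,s_{t_i-i}$ occupy spots $1,\dots,t_i-i$, and the $i-1$ earlier $t$-cars sit to their right in index order. Hence $t_{i-1}$ is at a spot $\ge t_i-1$, and $t_i$ must be inserted to its right. (The paper's forward direction records only $\alpha(t_i)>\alpha(j)$ for $j<t_i$, which does not by itself give this, so the step deserves to be explicit.)

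\textbf{False auxiliary claims.} The occupied spots need not form an initial segment: in the example above, after car $2$ arrives, spots $1$ and $4$ are occupied. Also, when $t_i$ arrives, $t_i-1$ cars have parked, not $n-m+i-1$. So your upper-bound argument and your sufficiency simulation both rest on a wrong picture. Your shift formula, ``final spot $=p$ plus the number of later preferences at or to its left,'' also fails. A later insertion at $q$ moves a car only if that car lies in the contiguous occupied run starting at $q$. In the same example, car $2$ prefers spot $4$ and three later cars prefer spots $1,2,3$, yet car $2$ ends in spot $5$.

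\textbf{What sufficiency needs.} You need an induction establishing three facts:
\begin{enumerate}
\item the arrived $s$-cars fill an initial block in order;
\item the arrived $t$-cars lie to the right of that block, in order;
\item when $t_i$ arrives, $t_{i-1}$ sits at a spot at most $\max\{\alpha(t_{i-1}),t_i-1\}$.
\end{enumerate}
The third fact holds because between those two arrivals only $s$-cars arrive, each at a spot $j\le t_i-i$. Such an insertion moves $t_{i-1}$ only if $t_{i-1}$ lies in a run of at most $i-1$ $t$-cars beginning at $j$, which leaves it at a spot $\le j+i-1\le t_i-1$. Then $\alpha(t_i)\ge\max\{t_i,\alpha(t_{i-1})+1\}$ places $t_i$ in an empty spot to the right of every earlier car. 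Each $s_j$, inserted at spot $j$, lands just after $s_{j-1}$ and before all $t$-cars. Since $\alpha$ is a parking function, the final configuration is then forced to be $\pi$.
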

\begin{proof}
    We first prove the backwards direction. Note that for a preference sequence $\alpha$ satisfying these conditions, the restriction that $\alpha(s_i)=i$ means that in the permutation $\mathcal{O}(\alpha)^{-1}$ the first $n-m$ spots will be occupied by the set of cars corresponding to the $s_i$'s in increasing order of car. The condition on the minimum value of each of the $\alpha(t_i)$'s is such that each car indexed by $t_i$ enters after the $t_{i-1}$th car and in a position after every proceeding car. In particular this means that the only cars which can park in a later spot than car $t_i$ must prefer a spot after $\alpha(t_i)$. The maximum condition is such that if not, car $t_i$ would park in a later spot than where it could be in $\pi$.

    For the forwards direction, if $\out(\alpha)=\pi$ then when considering $\pi^{-1}$ the cars occupying the first $n-m$ spots are the cars $\{s_i | i\in [n-m]\}$. Since these cars are in order, none can have bumped an earlier car from this collection so they must be in their preferred spots. Then since each car in the $t_i$'s are in order, the sequence $\alpha(t_i)$ must be increasing, and since car $t_i$ parks later than each car that entered the street earlier, $\alpha(t_i) > \alpha(j)$ for all $j < t_i$. Putting this all together yields the claim. 
\end{proof}
\begin{proof}[Proof of Proposition~\ref{prop:infinite_cascade_gras}]

By \Cref{lem:needed below}, a preference sequence $\alpha$ which produces a such permutation $\pi$ can be encoded by a word $w$ of length $m$ satisfying $w_i \in [\max\{t_i, w_i+1\},n-m+i]$. With this alternative description, we will show that such words can be encoded by North East Lattice Paths (NELP) in the skew Young diagram $(n-m)^m /(t_{m+1-i}-(m+1-i))_{1\le i \le m}$ when expressed in english notation from the south western most corner to the north eastern most corner. To see why, consider a NELP in the described shape. We claim that the map of sending the horizontal position of the $i$th up step to $w_i-i$ is a bijection. That this is well defined follows from the fact that as we skewed the $i$th row from the bottom out by $t_i-i$ means that the $i$th vertical step will be at least the position of $t_i$ or the position of the $i-1$st vertical step corresponding to $w_i$ being at least $w_{i-1}+1$ or $t_i$. Similarly the maximum position is at most $n-m$ so $w_i$ is at most $n-m+i$. That this is invertible is immediate from the conditions on the word. Finally, we note that these lattice paths correspond to order ideals in the poset of the skew Young diagram which are equivalently encoded by order preserving maps to $\{0,1\}$.
\end{proof}

As evidenced in the proofs of Corollary~\ref{corollary: longest recursive} and Corollary~\ref{corollary: counts}, simplifying the recursive formula in Theorem~\ref{theorem: general recursive} requires having a handle on the counts of each of the recursive terms.
Intuitively, the longest word is particularly amenable for this purpose because its decreasing pattern tends to be maintained across some recursion levels.
We anticipate that achieving this for other families of permutations might require a set of techniques that are fundamentally different to those introduced and utilized so far in this work. 
\section*{Acknowledgments}
This material is based upon work supported by the National Science Foundation under Grant No. DMS-1929284 while the authors were in residence at the Institute for Computational and Experimental Research in Mathematics in Providence, RI.
The authors thank ICERM for the opportunity to continue this work in the Collaborate@ICERM program. We thank the developers of OEIS \cite{OEIS} and SageMath \cite{sage}, which were useful in this research, and the CoCalc collaboration platform \cite{cocalc}.
\bibliographystyle{plain}
\bibliography{bib.bib}
\end{document}